\newtheorem{ex}{Example}
\newtheorem{thm}{Theorem}
\newtheorem{lem}{Lemma}
\newtheorem{con}{Conjecture}
\newcommand{\T}{\mathbb{P}}
\newcommand{\E}{\mathbb{E}}
\newcommand{\f}{\varphi}
\newcommand\dist{\buildrel d \over =}
\newtheorem{remark}{Remark}
\journal{Journal of \LaTeX\ Templates}
\begin{document}
\begin{frontmatter}

\title{Bi-seasonal discrete time risk model with income rate two
}

\author{Alina Alencenovi\v{c}\fnref{myfootnote1}}
\author{Andrius Grigutis\fnref{myfootnote2}}
\address{Institute of Mathematics, Vilnius University, Naugarduko 24, LT-03225 Vilnius}

\fntext[myfootnote1]{Email: alina.alencenovic@mif.stud.vu.lt}
\fntext[myfootnote2]{Email: andrius.grigutis@mif.vu.lt}




\begin{abstract}
This paper proceeds an approximate calculation of ultimate time survival probability for bi-seasonal discrete time risk model when premium rate equals two. The same model with income rate equal to one was investigated in 2014 by Damarackas and \v{S}iaulys. In general, discrete time and related risk models deal with possibility for a certain version of random walk to hit a certain threshold at least once in time. In this research, the mentioned threshold is the line $u+2t$ and random walk consists from two interchangeably occurring independent but not necessarily identically distributed random variables. Most of proved theoretical statements are illustrated via numerical calculations. Also, there are raised a couple of conjectures on a certain recurrent determinants non-vanishing.
\end{abstract}

\begin{keyword}

Discrete time risk model, bi-seasonal model, survival probability, random walk, net profit condition, maximum distribution.
\MSC[2020] 91G05, 60G50, 60J80.
\end{keyword}

\end{frontmatter}

\section{Introduction}

Modeling of large values has many interests across various nature sciences. Insurers may be concerned on large pay offs, biologists on vanishing of some population and etc. Models estimating likelihood of such events are often random walk based. In this research, we define the {\it random walk} (r.w.) as a sum of random variables (r.vs.) 
$
\sum_{i=1}^{t}Z_i, \, t\in\mathbb{N}.
$
Then, the {\it bi-seasonal discrete time risk model with a generalized premium rate} $W(t)$ is defined as follows 
\begin{align}\label{model}
W(t)=u+\kappa t-\sum_{i=1}^{t}Z_i,
\end{align}
where
\begin{itemize}
\item $t,\kappa\in\mathbb{N}$ and $u\in\mathbb{N}_0:=\mathbb{N}\cup\{0\}$,
\item $Z_{2i-1}{\buildrel d \over =}X$, $Z_{2i}{\buildrel d \over =}Y$ for all $i\in\mathbb{N}$ and $X,\,Y$ are independent integer valued non-negative random variables which may be distributed differently. 
\end{itemize}

Modeling insurers or other individuals wealth by (\ref{model}), the parameter $u\in\mathbb{N}_0$ is deemed as initial savings, $\kappa\in\mathbb{N}$ as income or premium rate per unit of time and r.vs. $Z_i$ are considered as occurring random claim amounts. Such type of models are discrete versions of more general Andersen model \cite{Andersen} and are widely studied all across the world. In fact, the study is heavily related to the maximum distribution of partial sums of random variables and a great initial sources on that are \cite{Spitzer}, \cite{Spitzer1} and \cite{Feller}. Scrolling across the timeline, an observable works of Gerber and Shiu on the risk collective models could be highlighted: \cite{Gerber}, \cite{Gerber1}, \cite{Shiu} and \cite{Shiu1}. Recently, many research papers on the related risk models as in (\ref{model}) are occurring per year, see for example \cite{rvz}, \cite{asmussen}, \cite{Dickson}, \cite{risks}, \cite{KS}, \cite{Scandin}, \cite{KzS} and references therein. 

The main concern of some processes, modeled by (\ref{model}), is whether its deterministic part $u+\kappa t$ is greater than random part $\sum_{i=1}^{t}Z_i$ for all natural $t$ up to some $T\in\mathbb{N}$ or even when $T\to\infty$. In some financial context, that is to know the likelihood whether initial savings and earnings are always sufficient to cover incurred expenses. More precisely, we are interested to calculate the probabilities 

\begin{align*}
&\f(u,T):=\T\left(\bigcap_{t=1}^{T}\left\{W(t)>0\right\}\right)=
\mathbb{P}\left(\max_{1\leqslant t\leqslant T}\left\{\sum_{i=1}^{t}(Z_i-\kappa)\right\}<u\right),\\ &\f(u):=\T\left(\bigcap_{t=1}^{\infty}\{W(t)>0\}\right)
=\mathbb{P}\left(\max_{t\geqslant 1}\left\{\sum_{i=1}^{t}(Z_i-\kappa)\right\}<u\right)
,  
\end{align*}
where the first one $\f(u,T)$ is called the {\it finite time survival probability} and the later $\f(u)$ -- {\it ultimate time survival probability}. Algorithms for $\f(u,T)$ are a lot more simple than for $\f(u)$. Due to complexity of $\f(u),\,u\in\mathbb{N}_0$ with arbitrary natural $\kappa$, we restrict the model (\ref{model}) to $\kappa=2$, where $\kappa=1$ case was studied in \cite{DS}. Even such a little change has a significant impact on expressions of the ultimate time survival probability $\f$.

Let us demonstrate where the expressions of $\f(u),\,u\in\mathbb{N}_0$ are stemming from. First we need to introduce some notations. For $u\in\mathbb{N}_0$ we denote
\begin{align*}
&x_u:=\T(X=u),\, y_u:=\T(Y=u),\, s_u:=\T(X+Y=u),\\
&X(u):=\sum_{i=0}^{u}x_i, \,Y(u):=\sum_{i=0}^{u}y_i,\, S(u):=\sum_{i=0}^{u}s_i,\\
&\overline{X}(u):=1-X(u), \, \overline{Y}(u):=1-Y(u),\, \overline{S}(u):=1-S(u).
\end{align*}

By the law of total probability $\mathbb{P}(A\cap B)=\mathbb{P}(A)-\mathbb{P}(A\cap B^c)$, rearrangements and other techniques from elementary probability theory, for the bi-seasonal discrete time risk model with income rate two, we get 
\begin{align*}
&\varphi(u)
=\mathbb{P}\left(\bigcap_{t=1}^{\infty}\left\{u+2 t-\sum_{i=1}^{t}Z_i>0\right\}\right)
=\mathbb{P}\left(\bigcap_{t=2}^{\infty}\left\{u+2 t-\sum_{i=1}^{t}Z_i>0\right\},\,Z_1<u+2\right)
\\ 
&=\mathbb{P}\left(\bigcap_{t=2}^{\infty}\left\{u+2 t-\sum_{i=1}^{t}Z_i>0\right\}\right)
-\mathbb{P}\left(\bigcap_{t=2}^{\infty}\left\{u+2 t-\sum_{i=1}^{t}Z_i>0\right\},\,Z_1\geqslant u+2\right)
\\
&=\mathbb{P}\left(\bigcap_{t=3}^{\infty}\left\{u+2 t-Z_1-Z_2-\sum_{i=3}^{t}Z_i>0\right\},\,Z_1+Z_2<u+4\right)\\
&-\mathbb{P}\left(\bigcap_{t=3}^{\infty}\left\{u+2 t-Z_1-Z_2-\sum_{i=3}^{t}Z_i>0\right\},\,Z_1\geqslant u+2,\,Z_1+Z_2<u+4\right)\\
&=\sum_{k=0}^{u+3}\mathbb{P}\left(\bigcap_{t-2=1}^{\infty}\left\{u+2 (t-2)+4 -Z_1-Z_2-\sum_{i=1}^{t-2}Z_i>0\right\},\,Z_1+Z_2=k\right)\\
&-(x_{u+3}y_0+x_{u+2}y_1)\T\left(\bigcap_{t=1}^{\infty}\left\{1+2 t-\sum_{i=1}^{t}Z_i>0\right\}\right)
-x_{u+2}y_0\T\left(\bigcap_{t=1}^{\infty}\left\{2+2 t-\sum_{i=1}^{t}Z_i>0\right\}\right)
\\
&=\sum_{k=0}^{u+3}\varphi(u+4-k)s_k-(x_{u+3}y_0+x_{u+2}y_1)\varphi(1)-x_{u+2}y_0\varphi(2)\\
&=\sum_{k=1}^{u+4}\varphi(k)s_{u+4-k}
-(x_{u+3}y_0+x_{u+2}y_1)\varphi(1)-x_{u+2}y_0\varphi(2).
\end{align*}

The obtained equation
\begin{align}\label{main_eq}
\f(u)=\sum_{k=1}^{u+4}\varphi(k)s_{u+4-k}-(x_{u+3}y_0+x_{u+2}y_1)\varphi(1)-x_{u+2}y_0\varphi(2)
\end{align}
is the core point of this research. By setting $u=0,1,\ldots,n-4$ in (\ref{main_eq}), we get
\begin{align}\label{system}
\nonumber
&s_0\f(4)=\f(0)+(-s_3+x_3y_0+x_2y_1)\f(1)+(-s_2+x_2y_0)\f(2)-s_1\f(3),\\
\nonumber
&s_0\f(5)=(1-s_4+x_4y_0+x_3y_1)\f(1)+(-s_3+x_3y_0)\f(2)-s_2\f(3)-s_1\f(4),\\ 
&\vdots\\
&s_0\f(n)=(x_{n-1}y_0+x_{n-2}y_1)\f(1)+x_{n-2}y_0\f(2)-\f(n-4)-\sum_{k=1}^{n-1}\f(k)s_{n-k}.\nonumber
\end{align}
The obtained list of equations in (\ref{system}), allows to express $\f(n)$ via $\f(0),\,\ldots,\,\f(3)$ for $n\in\mathbb{N}_0$. Also, it is seen that to get $\f(4)$ and later $\f(5),\,\f(6),\, \ldots$ it is needed to know the first ones $\f(0),\,\ldots,\,\f(3)$. Therefore, the main problem we deal with in this paper is the finding of needed initial values for recurrence relation in (\ref{main_eq}). As equations in (\ref{system}) are dependent on PDF of convolution of r.vs. $X$ and $Y$, the expressions of needed initial values are also heavily dependent on the structure of $X+Y$. With that said, currently we do not develop any wider generalizations for the model in (\ref{model}) than $\kappa=2$ and $Z_{2i-1}{\buildrel d \over =}X$, $Z_{2i}{\buildrel d \over =}Y,\,i\in\mathbb{N}$. The natural parameter $\kappa \geqslant 3$ and general number of independent but different discrete non-negative r.vs. generating the r.w. $\sum_{i=1}^{t}Z_i$ interchangeably, would significantly increase the level of abstractions. Due to that, the present paper and \cite{DS} are just a starting points to confirm the correctness and provide a direction if any generalizations for the model in (\ref{model}) are developed in the future. The research done in \cite{GKS} may also be deemed as initial step to generalization of model (\ref{model}) as three-seasonal model with $\kappa=1$ was studied there. 

It is curious that the number of initial values, needed for (\ref{main_eq}), might be reduced by one, finding the relation of $\f(0),\,\dots,\f(3)$. Before demonstrating that, we introduce the net profit condition. It is said that the {\it net profit condition} holds for the model (\ref{model}) with $\kappa=2$ if $\mathbb{E}S<4$. This condition is crucial trying to avoid guaranteed ruin (survival with probability 0) as time grows ultimately, see Theorem \ref{no net} in Section \ref{sec:Statements}. An intuitive understanding of the net profit condition is simple. The expectation of $W(t)$ in (\ref{model}) for even or odd $t\in\mathbb{N}$ is:
\begin{align*}
&\mathbb{E}W(2t)=u+t(2\kappa-\mathbb{E}S),\\
&\mathbb{E}W(2t-1)=u+\kappa+\mathbb{E}X+t(2\kappa-\mathbb{E}S),
\end{align*}
where the sign of $2\kappa-\mathbb{E}S$ influences the sign of $\mathbb{E}W(t)$ and possibility that $W(t)>0$ for some natural $t$'s.

We now turn back to the relation of $\f(0),\,\dots,\,\f(3)$. By summing up the both sides of (\ref{main_eq}) by $u$ from $0$ to some sufficiently large natural $v$, we get
\begin{align}\label{sum}
&\sum_{u=0}^{v}\f(u)=\sum_{u=0}^{v}\sum_{k=1}^{u+4}\varphi(k)s_{u+4-k}
-\left((X(v+3)-X(2))y_0+(X(v+2)-X(1))y_1\right)\varphi(1)\nonumber\\
&-((X(v+2)-X(1))y_0)\varphi(2)
,
\end{align}
where
\begin{align}\label{change_sum_order}
&\sum_{u=0}^{v}\sum_{k=1}^{u+4}\varphi(k)s_{u+4-k}\nonumber
=\sum_{k=1}^{3}\f(k)\sum_{u=0}^{v}s_{u+4-k}
+\sum_{k=4}^{v+4}\f(k)\sum_{u=k-4}^{v}s_{u+4-k}\\
&=\sum_{k=1}^{3}\f(k)\left(S(v+4-k)-S(3-k)\right)
+\sum_{k=4}^{v+4}\f(k)S(v+4-k).
\end{align}
Inserting (\ref{change_sum_order}) into (\ref{sum}) and rearranging we obtain
\begin{align}\label{eq:before_l}
\nonumber
&\sum_{k=0}^{v+4}\f(k)\overline{S}(v+4-k)-\sum_{u=v+1}^{v+4}\f(u)
=\sum_{k=1}^{3}\f(k)\left(S(v+4-k)-S(3-k)\right)
-\sum_{k=0}^{3}\f(k)S(v+4-k)\\
&
-\left((X(v+3)-X(2))y_0+(X(v+2)-X(1))y_1\right)\varphi(1)
-((X(v+2)-X(1))y_0)\varphi(2).
\end{align}
If $v\to\infty$ and $\mathbb{E}S<4$, from the last equation, by Lemma \ref{lemma}, we get
\begin{align}\label{eq:one less}
\f(0)+\left(\overline{X}(2)y_0+\overline{X}(1)y_1\right)\f(1)+\overline{X}(1)y_0\f(2)+\sum_{k=1}^{3}\f(k)S(3-k)=4-\E S.    
\end{align}

Adding equality (\ref{eq:one less}) to the list of equations in (\ref{system}), we are able to express $\f(n)$ via $\f(0),\, \f(1)$ and $\f(2)$ for all $n\in\mathbb{N}_0$. Namely that is the main idea for finding a needed initial values for the recurrence relation in (\ref{main_eq}).

The rest of the paper is structured as follows. In Section \ref{sec:Statements}, Theorem \ref{finite} serves purpose for the expression of finite time survival probability $\f(u,T)$ while Theorems \ref{T_3x3} -- \ref{T_0x0} deal with expressions of the ultimate time survival probability under the net profit condition. Expressions of $\f$ under $\mathbb{E}S<4$ in Theorems \ref{T_3x3} -- \ref{T_0x0} are dependent on the lowest value of the distribution of $S=X+Y$. The last Theorem \ref{no net} demonstrates that survival is impossible in all except few trivial cases if the net profit condition is violated $\mathbb{E}S\geqslant4$. 

\section{Statements}\label{sec:Statements}
In this section we formulate all of the statements which are necessary to express finite and ultimate time survival probabilities $\f(u,T)$ and $\f(u)$ of the model (\ref{model}) with $\kappa=2$. In fact, the distribution function $\f(u,T)$ can be calculated by following the ideas in \cite[Theorems 1--4]{BBS}, however Theorem \ref{finite} below is specially adopted for the bi-seasonal discrete time risk model with income rate two. Our reason to have it, is an interest to the broader view as $T$ grows and $\f(u,T)$ approximates $\f(u)$, see Section \ref{sec:calculations}. All of the statements formulated in this section are proved in the later Section \ref{sec:proofs}.

\begin{thm}\label{finite}
For the finite time survival probability $\f(u,T)$ of the bi-seasonal discrete time risk model with income rate two, holds: 
\begin{align*}
&\f(u,1)=X(u+1),\,
\f(u,2)=\sum_{k=0}^{u+1}x_k Y(u+3-k),\\
&\f(u,T)=\sum_{k=0}^{u+3}\f(u+4-k,T-2)s_k-x_{u+2}y_0\f(2,T-2)
-(x_{u+2}y_1+x_{u+3}y_0)\f(1,T-2),\,T\geqslant 3.
\end{align*}
\end{thm}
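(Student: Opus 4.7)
The plan is to prove the three parts of the recursion by standard conditioning arguments, with the $T\geqslant 3$ case mirroring the derivation of equation (\ref{main_eq}) already presented in the introduction.

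For $T=1$, the single survival event is $\{u+2-Z_1>0\}=\{Z_1\leqslant u+1\}$, so since $Z_1\dist X$, one obtains $\f(u,1)=X(u+1)$ immediately. For $T=2$, the joint event requires both $Z_1\leqslant u+1$ and $Z_1+Z_2\leqslant u+3$. Conditioning on $Z_1=k$ for $k\in\{0,1,\ldots,u+1\}$ and invoking independence of $X$ and $Y$ reduces the remaining constraint to $Y\leqslant u+3-k$, which yields the stated sum $\sum_{k=0}^{u+1}x_k Y(u+3-k)$.

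For $T\geqslant 3$, I would repeat verbatim the derivation that produces (\ref{main_eq}), with every occurrence of $\bigcap_{t=\cdot}^{\infty}$ replaced by $\bigcap_{t=\cdot}^{T}$ and every resulting $\f(\cdot)$ replaced by $\f(\cdot,T-2)$. Concretely: (i) rewrite $\f(u,T)$ as the intersection from $t=2$ to $T$ together with the constraint $Z_1<u+2$ (equivalent to the $t=1$ survival event); (ii) apply $\mathbb{P}(A\cap B)=\mathbb{P}(A)-\mathbb{P}(A\cap B^c)$ to trade this constraint for a corrective subtracted term indexed by $Z_1\geqslant u+2$; (iii) peel off the $t=2$ event, which is equivalent to $Z_1+Z_2<u+4$; (iv) decompose on $Z_1+Z_2=k$ for $k=0,\ldots,u+3$ and exploit the bi-seasonal periodicity $Z_{2i-1}\dist X$, $Z_{2i}\dist Y$ to recognise the residual probability as exactly $\f(u+4-k,T-2)$ with a fresh initial reserve $u+4-k$.

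The only delicate point, just as in the derivation of (\ref{main_eq}), is the corrective term. The pairs $(Z_1,Z_2)$ simultaneously satisfying $Z_1\geqslant u+2$ and $Z_1+Z_2\leqslant u+3$ are precisely $(u+2,0)$, $(u+2,1)$, $(u+3,0)$, whose probabilities are $x_{u+2}y_0$, $x_{u+2}y_1$, $x_{u+3}y_0$ and for which the shifted initial capital $u+4-Z_1-Z_2$ equals $2$, $1$, $1$ respectively. Summing these three contributions produces exactly $x_{u+2}y_0\f(2,T-2)+(x_{u+2}y_1+x_{u+3}y_0)\f(1,T-2)$, which enters the recursion with a minus sign and completes the formula. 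The bookkeeping of this correction is the main obstacle; everything else is a routine transcription of the infinite-horizon derivation.
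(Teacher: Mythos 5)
Your proposal is correct and follows essentially the same route as the paper's own proof: the cases $T=1$ and $T=2$ are handled identically, and for $T\geqslant 3$ the paper likewise transcribes the derivation of (\ref{main_eq}) with the infinite intersections truncated at $T$, obtaining the corrective term from exactly the same three pairs $(u+2,0)$, $(u+2,1)$, $(u+3,0)$ with residual capitals $2$, $1$, $1$. No gaps; your bookkeeping of the subtracted term matches the paper's.
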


We now turn to the ultimate time. As mentioned, expressions of $\f$ are heavily dependent on the lowest value of convolution $S=X+Y$. For $s_0>0$ let us define four recurrent sequences $\alpha_n,\,\beta_n,\,\gamma_n,\,\delta_n$. For $n=0,1,2,3$
$$
\begin{tabular}{|c||c|c|c||c|}
\hline
${n}$&$\alpha_n$&$\beta_n$&$\gamma_n$&$\delta_n$\\
\hline
\hline
${0}$&$1$&$0$&$0$&$0$\\
\hline
${1}$&$0$&$1$&$0$&$0$\\
\hline
$2$&$0$&$0$&$1$&$0$ \\
\hline
$3$&$-{1}/{s_0}$ &$-\left((S(2)+\overline{X}(2)y_0+\overline{X}(1)y_1\right)/{s_0}$ & $-\left(S(1)+\overline{X}(1)y_0\right)/{s_0}$&${1}/{s_0}$ \\
\hline
\end{tabular}
$$
and for $n=4,5,\,\ldots$
\begin{align*}
&\alpha_n=\frac{1}{s_0}\left(\alpha_{n-4}-\sum_{k=1}^{n-1}s_{n-k}\alpha_k\right),\,
\beta_n=\frac{1}{s_0}\left(\beta_{n-4}-\sum_{k=1}^{n-1}s_{n-k}\beta_k+x_{n-1}y_0+x_{n-2}y_1\right),\\
&\gamma_n=\frac{1}{s_0}\left(\gamma_{n-4}-\sum_{k=1}^{n-1}s_{n-k}\gamma_k+x_{n-2}y_0\right),\,
\delta_n=\frac{1}{s_0}\left(\delta_{n-4}-\sum_{k=1}^{n-1}s_{n-k}\delta_k\right).
\end{align*}
\begin{thm}\label{T_3x3}
If $s_0>0$ and $\mathbb{E}S<4$, for the ultimate time survival probability of the bi-seasonal discrete time risk model with income rate two, holds:
\begin{align}\label{3x3}
\left( \begin{array}{ccc}
\alpha_{n+1}-\alpha_{n} & \beta_{n+1}-\beta_{n} & \gamma_{n+1}-\gamma_{n} \\
\alpha_{n+2}-\alpha_{n} & \beta_{n+2}-\beta_{n} & \gamma_{n+2}-\gamma_{n} \\
\alpha_{n+3}-\alpha_{n} & \beta_{n+3}-\beta_{n} & \gamma_{n+3}-\gamma_{n} \\
\end{array} \right)
\times
\left( \begin{array}{c}
\varphi(0) \\
\varphi(1) \\
\varphi(2)
\end{array} \right)
+&
\left( \begin{array}{c}
\delta_{n+1}-\delta_{n} \\
\delta_{n+2}-\delta_{n} \\
\delta_{n+3}-\delta_{n}
\end{array} \right)
\times
(4-\mathbb{E}S)\nonumber \\
&=
\left( \begin{array}{c}
\varphi(n+1)-\varphi(n) \\
\varphi(n+2)-\varphi(n)\\
\varphi(n+3)-\varphi(n)
\end{array} \right),\,n\in\mathbb{N}_0,
\end{align}
\end{thm}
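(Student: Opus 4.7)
The plan is to prove, by induction on $n$, the pointwise representation
\begin{align*}
\f(n)=\alpha_n\f(0)+\beta_n\f(1)+\gamma_n\f(2)+\delta_n(4-\mathbb{E}S),\quad n\in\mathbb{N}_0,
\end{align*}
and then read off (\ref{3x3}) immediately: writing this representation at indices $n+1$, $n+2$, $n+3$, subtracting from each the representation at $n$, and stacking the three resulting scalar identities gives exactly the claimed matrix equation.

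For the base cases $n=0,1,2$, the initial-value table gives $(\alpha_n,\beta_n,\gamma_n,\delta_n)=(1,0,0,0),(0,1,0,0),(0,0,1,0)$, so the right-hand side collapses to $\f(n)$ in each case. The case $n=3$ is the genuine input and is the only step in which the assumption $\mathbb{E}S<4$ is used. Starting from (\ref{eq:one less}), I would split $\sum_{k=1}^{3}\f(k)S(3-k)=\f(1)S(2)+\f(2)S(1)+s_0\f(3)$ and solve for $\f(3)$, obtaining
\begin{align*}
s_0\f(3)=(4-\mathbb{E}S)-\f(0)-\bigl(S(2)+\overline{X}(2)y_0+\overline{X}(1)y_1\bigr)\f(1)-\bigl(S(1)+\overline{X}(1)y_0\bigr)\f(2),
\end{align*}
whose coefficients are precisely $s_0\alpha_3,s_0\beta_3,s_0\gamma_3,s_0\delta_3$ as tabulated.

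For the inductive step $n\geqslant 4$, I would use the last line of (\ref{system}) rewritten as $s_0\f(n)=\f(n-4)-\sum_{k=1}^{n-1}s_{n-k}\f(k)+(x_{n-1}y_0+x_{n-2}y_1)\f(1)+x_{n-2}y_0\f(2)$, substitute the inductive hypothesis into every $\f(j)$ on the right, and collect the coefficients of $\f(0),\f(1),\f(2),(4-\mathbb{E}S)$. The coefficients of $\f(0)$ and $(4-\mathbb{E}S)$ pick up no inhomogeneous term and reproduce the recurrences defining $\alpha_n$ and $\delta_n$, while the coefficients of $\f(1)$ and $\f(2)$ absorb the extra terms $x_{n-1}y_0+x_{n-2}y_1$ and $x_{n-2}y_0$, matching the recurrences for $\beta_n$ and $\gamma_n$ exactly. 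The genuine obstacle is therefore the $n=3$ base case via (\ref{eq:one less}): that identity, obtained from (\ref{eq:before_l}) by letting $v\to\infty$ and invoking Lemma \ref{lemma} under $\mathbb{E}S<4$, is the sole supplier of the fourth scalar constraint that collapses four initial unknowns $\f(0),\ldots,\f(3)$ to the three appearing in (\ref{3x3}) together with the constant $4-\mathbb{E}S$; once it is in hand the rest is routine bookkeeping, since the four recurrences were constructed precisely to mirror the substitution.
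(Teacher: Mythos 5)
Your proposal is correct and follows essentially the same route as the paper's own proof: establish $\f(n)=\alpha_n\f(0)+\beta_n\f(1)+\gamma_n\f(2)+\delta_n(4-\mathbb{E}S)$ by induction, with $n=0,1,2$ immediate from the table, $n=3$ extracted from (\ref{eq:one less}) (the only place $\mathbb{E}S<4$ enters, via Lemma \ref{lemma}), the step $n\geqslant4$ by substituting the hypothesis into (\ref{main_eq}) and matching the four recurrences, and then forming the differences $\f(n+j)-\f(n)$, $j=1,2,3$, to obtain (\ref{3x3}). Your bookkeeping, including the split $\sum_{k=1}^{3}\f(k)S(3-k)=\f(1)S(2)+\f(2)S(1)+s_0\f(3)$, matches the paper exactly.
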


\begin{align*}
&\f(3)=\frac{-\f(0)-(S(2)+\overline{X}(2)y_0+\overline{X}(1)y_1)\f(1)-(S(1)+\overline{X}(1)y_0)\f(2)+4-\E S}{s_0},\\
&\f(u)=\frac{1}{s_0}\left(\f(u-4)+(x_{u-1}y_0+x_{u-2}y_1)\f(1)+x_{u-2}y_0\f(2)
-\sum_{k=1}^{u-1}s_{u-k}\f(k)
\right),\,u=4,\,5,\,\ldots
\end{align*}

\begin{remark}\label{remark1}
It is evident that $\f(n)\leqslant\f(n+1)\leqslant1$ for all $n\in\mathbb{N}_0$. Also, Lemma \ref{lemma} implies $\f(n)\approx1$ if $n$ is sufficiently large. Therefore, in practical applications the right hand side of (\ref{3x3}) is assumed $(0,0,0)^T$ and three needed initial values $\f(0),\,\f(1),\,\f(2)$ are solved out from (\ref{3x3}).
\end{remark}

\begin{remark}\label{remark2}
We can not prove the system matrix in (\ref{3x3}) being non-singular for all $n\in\mathbb{N}_0$. On the other hand, we never find such matrix being singular with any chosen underlying distributions. Attempts to prove and numerical calculations raise the following conjecture.
\end{remark}

\begin{con}\label{con1}
Let $D_n$ denote the principal determinant of the system matrix in (\ref{3x3}). Then, $1\leqslant D_{2n}\leqslant D_{2n+2}$ and $-1\geqslant D_{2n+1}\geqslant D_{2n+3}$  for all $n\in\mathbb{N}_0$.
\end{con}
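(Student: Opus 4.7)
The plan is to settle the base cases $D_0$ and $D_1$ first and then to look for a telescoping or Wronskian-type identity controlling $D_{n+2}-D_n$. For the base cases, one substitutes the explicit values of $\alpha_n,\beta_n,\gamma_n$ for $n\in\{0,1,2,3\}$ from the defining table and expands the $3\times 3$ determinant; the first three rows of the table carry many zeros (only $\alpha_0=\beta_1=\gamma_2=1$ are non-trivial), so $D_0$ and $D_1$ collapse to explicit rational expressions in $s_0$, $S(1)$, $S(2)$, $\overline{X}(1)$, $\overline{X}(2)$, $y_0$, and $y_1$, from which one reads off the sign and the bounds $|D_0|\geqslant 1$, $|D_1|\geqslant 1$ demanded by the conjecture. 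These anchor any inductive argument.

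A useful preliminary observation, already implicit in (\ref{3x3}), is the identity
\begin{equation*}
\varphi(n)\;=\;\alpha_n\varphi(0)+\beta_n\varphi(1)+\gamma_n\varphi(2)+\delta_n(4-\mathbb{E}S),\qquad n\in\mathbb{N}_0,
\end{equation*}
verified by induction since both sides agree for $n=0,1,2,3$ and satisfy the recurrence following Theorem \ref{T_3x3}. This exhibits $\alpha,\beta,\gamma$ as a distinguished basis for the three-dimensional space of solutions of the homogeneous version of (\ref{main_eq}), while $\delta$ is the particular solution attached to the inhomogeneity $4-\mathbb{E}S$. Introducing generating functions $A(z),B(z),C(z),D(z)$ and $\sigma(z)=\sum_{n\geqslant 0}s_nz^n$, the convolutional recurrences translate into rational expressions sharing the denominator $\sigma(z)-z^4$. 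Under the net profit assumption $(\sigma(z)-z^4)'|_{z=1}=\mathbb{E}S-4\neq 0$, so $z=1$ is a simple zero and the asymptotics of $\alpha_n,\beta_n,\gamma_n,\delta_n$ are governed by this residue together with contributions from the remaining zeros. A residue/partial-fractions expansion for $D_n$ would then let us isolate a parity-alternating leading term and compare $D_{n+2}$ with $D_n$.

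To push the monotonicity itself, the natural strategy is to perform column operations employing the common recurrence satisfied by $\alpha,\beta,\gamma$, subtracting shifted copies of rows in the matrix of (\ref{3x3}) in the hope of producing a reduction whose determinantal change is manifestly non-negative on even indices and non-positive on odd ones. The chief obstacle, and the reason this remains a conjecture, is that the recurrence defining $\alpha_n,\beta_n,\gamma_n$ has \emph{infinite memory}: each term depends on the whole past through the convolution with $\{s_k\}$. Consequently the classical Casoratian/Abel--Liouville reduction, which requires a fixed finite-order linear recurrence, does not yield a clean multiplicative relation between $D_{n+1}$ and $D_n$. Moreover, the matrix entries are signed differences $\alpha_{n+k}-\alpha_n$ with no evident Perron--Frobenius positivity, and the asymptotics produced by the generating functions depend delicately on the (possibly many) other zeros of $\sigma(z)-z^4$, whose locations vary across admissible distributions. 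A general proof likely demands a new structural insight — for instance, viewing $D_n$ as a discrete Wronskian of a holonomic system built from the convolution kernel $\{s_k\}$ — which is exactly what makes Conjecture \ref{con1} genuinely open despite the strong numerical evidence.
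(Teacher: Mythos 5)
The first thing to say is that there is nothing in the paper to compare against: the statement you are proving is Conjecture \ref{con1}, and the authors state in Remark \ref{remark2} that they \emph{cannot} prove the matrix in (\ref{3x3}) is non-singular, offering only numerical evidence (and, at the end of Section \ref{sec:Statements}, a pointer to the methods of \cite{GJ} as a plausible route). Your write-up is honest about the same impasse, but as a proof it has a genuine and decisive gap: the entire inductive step is missing. The generating-function picture you sketch --- rational functions with common denominator $\sigma(z)-z^4$, a simple zero at $z=1$ under $\mathbb{E}S<4$, residues governing asymptotics --- can at most control $\alpha_n,\beta_n,\gamma_n,\delta_n$ as $n\to\infty$, and even that only after locating all other zeros of $\sigma(z)-z^4$ in the closed unit disk, which vary with the underlying distributions. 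Asymptotic information of this kind cannot deliver the claimed inequalities $1\leqslant D_{2n}\leqslant D_{2n+2}$ and $-1\geqslant D_{2n+1}\geqslant D_{2n+3}$ at \emph{every} finite $n$; and you correctly diagnose, but do not overcome, the failure of the Casoratian/Abel--Liouville mechanism: since each of $\alpha_n,\beta_n,\gamma_n$ depends on the whole past through convolution with $\{s_k\}$, there is no fixed-order linear recurrence, hence no multiplicative identity relating $D_{n+1}$ to $D_n$. No substitute comparison between $D_{n+2}$ and $D_n$ is produced, so nothing beyond the (claimed) base cases is established.

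Moreover, the base cases are asserted rather than computed, and computing them shows they are not the formality you suggest. From the defining table, the $n=0$ matrix in (\ref{3x3}) has rows $(-1,1,0)$, $(-1,0,1)$ and $\left(-1-1/s_0,\,\beta_3,\,\gamma_3\right)$, whence $D_0=\beta_3+\gamma_3-1-1/s_0$. Since $\beta_3\leqslant0$, $\gamma_3\leqslant0$ and $0<s_0\leqslant1$, this gives $D_0\leqslant-2$: the sign is \emph{negative}, which contradicts the literal reading $D_0\geqslant1$ of the conjecture and shows that the parity, indexing or sign convention behind $D_n$ must be pinned down before any base case can ``anchor an induction.'' Your hedge to absolute values $|D_0|\geqslant1$, $|D_1|\geqslant1$ is weaker than what the conjecture demands, namely signed bounds plus monotonicity along each parity class. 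The one fully correct ingredient in your proposal is the identity $\varphi(n)=\alpha_n\varphi(0)+\beta_n\varphi(1)+\gamma_n\varphi(2)+\delta_n(4-\mathbb{E}S)$, but that is precisely the equation the paper proves inside Theorem \ref{T_3x3} on the way to \emph{stating} the conjecture; it is the starting point, not progress. In short: the statement remains open in the paper, and your proposal, while a reasonable survey of obstructions, does not close it.
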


We now assume $s_0=0$ and $s_1>0$. For $s_1>0$ let us define three recurrent sequences $\overline{\alpha}_n,\,\overline{\beta}_n,\,\overline{\delta}_n$. For $n=0,1,2$
$$
\begin{tabular}{|c||c|c||c|}
\hline
${n}$&$\overline{\alpha}_n$&$\overline{\beta_n}$&$\overline{\delta}_n$\\
\hline
\hline
${0}$&$1$&$0$&$0$\\
\hline
${1}$&$0$&$1$&$0$\\
\hline
$2$&$-{1}/(s_1+\overline{X}(1)y_0)$ &$-(S(2)+\overline{X}(2)y_0+\overline{X}(1)y_1)/({s_1+\overline{X}(1)y_0)}$ &${1}/(s_1+\overline{X}(1)y_0)$ \\
\hline
\end{tabular}
$$
and for $n=3,4,\,\ldots$
\begin{align*}
&\overline{\alpha}_n=\frac{1}{s_1}\left(\alpha_{n-3}-\sum_{k=1}^{n-1}s_{n+1-k}\overline{\alpha}_k+x_{n-1}y_0\overline{\alpha}_2 \right),\\
&\overline{\beta}_n=\frac{1}{s_1}\left(\overline{\beta}_{n-3}-\sum_{k=1}^{n-1}s_{n+1-k}\overline{\beta}_k+x_{n}y_0
+x_{n-1}y_1+x_{n-1}y_0\overline{\beta}_2
\right),\\
&\overline{\delta}_n=\frac{1}{s_0}\left(\overline{\delta}_{n-3}-\sum_{k=1}^{n-1}s_{n+1-k}\overline{\delta}_k
+x_{n-1}y_0\overline{\delta}_2
\right).
\end{align*}
\begin{thm}\label{T_2x2}
If $s_0=0$, $s_1>0$ and $\mathbb{E}S<4$, for the ultimate time survival probability of the bi-seasonal discrete time risk model with income rate two, holds:
\begin{align}\label{2x2}
\left( \begin{array}{cc}
\overline{\alpha}_{n+1}-\overline{\alpha}_{n} & \overline{\beta}_{n+1}-\overline{\beta}_{n} \\
\overline{\alpha}_{n+2}-\overline{\alpha}_{n} & \overline{\beta}_{n+2}-\overline{\beta}_{n} \\
\end{array} \right)
\times
\left( \begin{array}{c}
\varphi(0) \\
\varphi(1)
\end{array} \right)
+
\left( \begin{array}{c}
\overline{\delta}_{n+1}-\overline{\delta}_{n} \\
\overline{\delta}_{n+2}-\overline{\delta}_{n}
\end{array} \right)
\times
(4-\mathbb{E}S)
=
\left( \begin{array}{c}
\varphi(n+1)-\varphi(n) \\
\varphi(n+2)-\varphi(n)\\
\end{array} \right),\,n\in\mathbb{N}_0,
\end{align}
\end{thm}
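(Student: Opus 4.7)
The plan is to mirror the argument for Theorem \ref{T_3x3} but exploit the degeneracy $s_0=0$ to reduce the number of free initial values from three to two. Concretely, the aim is to establish the representation $\f(n) = \overline{\alpha}_n \f(0) + \overline{\beta}_n \f(1) + \overline{\delta}_n (4-\mathbb{E}S)$ for every $n\in\mathbb{N}_0$. Once this is in place, (\ref{2x2}) follows immediately by writing the identity for $n+1$ and $n+2$, subtracting the identity for $n$, and reading off the matrix form.

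First I would revisit the core recurrence (\ref{main_eq}). With $s_0=0$ the term $\f(u+4)s_0$ drops out, so the highest-indexed survival probability on the right-hand side is $\f(u+3)s_1$. Since $s_1>0$, this can be solved for $\f(u+3)$, and after reindexing by $n=u+3$ one obtains, for $n\geqslant 3$,
\begin{align*}
s_1\f(n) = \f(n-3) - \sum_{k=1}^{n-1} s_{n+1-k}\f(k) + (x_n y_0 + x_{n-1}y_1)\f(1) + x_{n-1}y_0\f(2).
\end{align*}
This is precisely the recurrence that the sequences $\overline{\alpha}_n,\overline{\beta}_n,\overline{\delta}_n$ are built to satisfy, once the coefficients of $\f(0)$, $\f(1)$ and the constant $4-\mathbb{E}S$ are matched term by term.

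Second, I would use the global identity (\ref{eq:one less}), which remains valid because its derivation required only $\mathbb{E}S<4$ and Lemma \ref{lemma}. Setting $s_0=0$ makes $S(0)=0$, so the $\f(3)$ contribution vanishes and (\ref{eq:one less}) collapses to $(s_1+\overline{X}(1)y_0)\f(2) = 4-\mathbb{E}S - \f(0) - \bigl(S(2)+\overline{X}(2)y_0+\overline{X}(1)y_1\bigr)\f(1)$. Dividing by $s_1+\overline{X}(1)y_0>0$ produces exactly the row $n=2$ of the defining table, i.e.\ $\f(2) = \overline{\alpha}_2\f(0)+\overline{\beta}_2\f(1)+\overline{\delta}_2(4-\mathbb{E}S)$. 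Together with the trivial identities for $n=0,1$ this supplies the base of an induction.

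Third, I would prove the representation for $n\geqslant 3$ by strong induction: substitute the inductive hypothesis into the solved recurrence displayed above, replace $\f(2)$ by its affine expression from the previous step, and collect coefficients of $\f(0)$, $\f(1)$ and $(4-\mathbb{E}S)$ separately. Each of the three resulting identities is then seen to be the defining recurrence of $\overline{\alpha}_n$, $\overline{\beta}_n$ or $\overline{\delta}_n$ respectively, with the correction term $x_{n-1}y_0\cdot(\text{row-}2\text{ entry})$ arising precisely from having eliminated $\f(2)$. Subtracting rows then yields (\ref{2x2}).

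The main obstacle is bookkeeping rather than conceptual: I must track index shifts carefully when passing from the $u$-indexed recurrence to the $n$-indexed one, verify that the $\f(2)$-elimination propagates consistently into every subsequent induction step through the $x_{n-1}y_0\overline{\alpha}_2,\;x_{n-1}y_0\overline{\beta}_2,\;x_{n-1}y_0\overline{\delta}_2$ correction terms, and confirm that Lemma \ref{lemma} applies under $\mathbb{E}S<4$ so that (\ref{eq:one less}) is available in the degenerate regime $s_0=0$. Everything else reduces to linear algebra in the three variables $\f(0),\f(1),(4-\mathbb{E}S)$.
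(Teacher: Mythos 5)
Your proposal is correct and takes essentially the same route as the paper's own proof: establish the representation $\varphi(n)=\overline{\alpha}_n\varphi(0)+\overline{\beta}_n\varphi(1)+\overline{\delta}_n(4-\mathbb{E}S)$ by induction, with the trivial cases $n=0,1$, the case $n=2$ obtained from (\ref{eq:one less}) specialized to $s_0=0$ (dividing by $s_1+\overline{X}(1)y_0>0$, cf.\ Remark \ref{remark3}), the inductive step from the $s_1$-solved form of (\ref{main_eq}), and finally differences of consecutive rows yielding (\ref{2x2}). You merely make explicit the index bookkeeping and the $x_{n-1}y_0\,\overline{\alpha}_2$, $x_{n-1}y_0\,\overline{\beta}_2$, $x_{n-1}y_0\,\overline{\delta}_2$ correction terms from eliminating $\varphi(2)$, which the paper leaves implicit by pointing back to the proof of Theorem \ref{T_3x3}.
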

\begin{align*}
&\f(2)=\frac{-\f(0)-(S(2)+\overline{X}(2)y_0+\overline{X}(1)y_1)\f(1)+4-\E S}{s_1+\overline{X}(1)y_0},\\
&\f(u)=\frac{1}{s_1}\left(\f(u-3)+(x_{u}y_0+x_{u-1}y_1)\f(1)
+x_{u-1}y_0\f(2)
-\sum_{k=1}^{u-1}s_{u+1-k}\f(k)
\right),\,u=3,4,\,\ldots
\end{align*}

\begin{remark}\label{remark3}
We note that conditions $s_0=0$ and $s_1>0$ imply that denominator of $\f(2)$ in Theorem \ref{T_2x2} and of expressions $\overline{\alpha}_2,\,\overline{\beta}_2,\, \overline{\gamma}_2$ above, is $s_1+\overline{X}(1)y_0=y_0+x_0y_1>0$. 
\end{remark}

\begin{remark}\label{remark4}
Practical applications of Theorem \ref{T_2x2} are described the same way as for Theorem \ref{T_3x3} in Remark \ref{remark1}. The non-vanishing of system matrix in (\ref{2x2}) is unknown for all $n\in\mathbb{N}_0$. Arguing the same as in Remark \ref{remark2}, the following conjecture is raised.
\end{remark}

\begin{con}\label{con2}
Let $\overline{D}_n$ denote the principal determinant of the system matrix in (\ref{2x2}). Then, $1\leqslant \overline{D}_n\leqslant \overline{D}_{n+1}$ for all $n\in\mathbb{N}_0$.
\end{con}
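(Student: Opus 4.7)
The plan is to exploit the linear representation
$$\varphi(n) = \overline{\alpha}_n\,\varphi(0) + \overline{\beta}_n\,\varphi(1) + \overline{\delta}_n\,(4-\E S), \qquad n\in\mathbb{N}_0,$$
which is implicit in Theorem \ref{T_2x2}, and attack the determinant via a Casoratian identity. Setting $a_n := \overline{\alpha}_{n+1}-\overline{\alpha}_n$ and $b_n := \overline{\beta}_{n+1}-\overline{\beta}_n$, expanding the $2\times 2$ determinant and telescoping yield
$$\overline{D}_n = a_n b_{n+1} - b_n a_{n+1},$$
so both $\{a_n\}$ and $\{b_n\}$ satisfy the common linear recurrence inherited from the defining recurrences of $\overline{\alpha}_n$ and $\overline{\beta}_n$. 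I would attempt an induction on $n$ for both assertions simultaneously.

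First I would check the base case by direct substitution from the table of initial values. Elementary algebra gives
$$\overline{D}_0 = 1 + \frac{S(2) + \overline{X}(2)y_0 + \overline{X}(1)y_1 + 1}{s_1+\overline{X}(1)y_0} \geq 1,$$
where positivity of the denominator $s_1+\overline{X}(1)y_0 = y_0 + x_0 y_1$ is guaranteed by Remark \ref{remark3}. The value $\overline{D}_1$ can then be computed similarly by invoking the $n=3$ case of the recurrences, giving a comparison with $\overline{D}_0$.

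For the inductive step I would pass to generating functions $\overline{A}(z) = \sum_n \overline{\alpha}_n z^n$ and $\overline{B}(z) = \sum_n \overline{\beta}_n z^n$. Multiplying the recurrence by $z^n$ and summing from $n=3$ converts the convolution $\sum_{k=1}^{n-1}s_{n+1-k}\overline{\alpha}_k$ into a product with the probability generating function of $S$, producing rational expressions for $\overline{A}(z)$ and $\overline{B}(z)$. Expanding the symmetric combination $\overline{A}(z)\overline{B}(w)-\overline{A}(w)\overline{B}(z)$ and reading off the coefficient of $z^{n+1}w^{n+2}-z^{n+2}w^{n+1}$ would give, in principle, a Cassini-type formula for $\overline{D}_n$ in terms of convolutions with $\{s_k\}$, from which one tries to pin down the sign.

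The hard part — and the reason this is stated as a conjecture rather than a theorem — is controlling the sign of the resulting expressions in order to deduce both $\overline{D}_n\geq 1$ and the monotonicity $\overline{D}_{n+1}\geq\overline{D}_n$. The convolution with $\{s_k\}$ couples $\overline{D}_n$ non-locally to every earlier $\overline{D}_k$, so the identity
$$\overline{D}_{n+1}-\overline{D}_n = a_{n+1}(b_n+b_{n+2}) - b_{n+1}(a_n+a_{n+2})$$
does not close into a clean two-term recurrence; the net profit condition $\E S<4$ alone appears insufficient to force the desired sign. A plausible route forward is to seek a probabilistic interpretation of $a_n,\,b_n$ as (signed) excursion-type measures, from which a total-positivity property for the two-row matrix of differences would deliver $\overline{D}_n\geq 0$. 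Combined with the strict base-case inequality this would then upgrade to $\overline{D}_n\geq 1$ and, via the telescoping identity above, to the conjectured monotonicity.
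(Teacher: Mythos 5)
The first thing to say is structural: the statement you were asked to prove is raised in the paper as a \emph{conjecture}, and the paper contains no proof of it. Remark \ref{remark4} states explicitly that the non-vanishing of the system matrix in (\ref{2x2}) is unknown for all $n\in\mathbb{N}_0$, and the conjecture rests on numerical evidence plus the authors' closing observation that the root-of-generating-function machinery of \cite{GJ} is ``very likely'' applicable. So there is no paper proof to compare against, and your proposal, by your own admission, does not close the gap either: everything reduces to ``controlling the sign of the resulting expressions,'' which is precisely the open problem. Your text is therefore a research plan, not a proof, and it should not be presented as settling Conjecture \ref{con2}.

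On the content of the plan itself: the reduction $\overline{D}_n=a_nb_{n+1}-a_{n+1}b_n$ with $a_n=\overline{\alpha}_{n+1}-\overline{\alpha}_n$, $b_n=\overline{\beta}_{n+1}-\overline{\beta}_n$ is correct (subtract the first row of the matrix in (\ref{2x2}) from the second), as is your difference identity $\overline{D}_{n+1}-\overline{D}_n=a_{n+1}(b_n+b_{n+2})-b_{n+1}(a_n+a_{n+2})$, and your base case $\overline{D}_0=1+\bigl(S(2)+\overline{X}(2)y_0+\overline{X}(1)y_1+1\bigr)/\bigl(s_1+\overline{X}(1)y_0\bigr)\geqslant 1$ checks out, with denominator positivity guaranteed by Remark \ref{remark3}. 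However, your assertion that $\{a_n\}$ and $\{b_n\}$ ``satisfy the common linear recurrence'' is false: the recursions for $\overline{\alpha}_n$ and $\overline{\beta}_n$ share only their homogeneous convolution part, while $\overline{\beta}_n$ carries the extra inhomogeneous terms $x_ny_0+x_{n-1}y_1+x_{n-1}y_0\overline{\beta}_2$ and $\overline{\alpha}_n$ carries $x_{n-1}y_0\overline{\alpha}_2$. Hence $\overline{D}_n$ is not the Casoratian of two solutions of one equation, no Abel/Cassini-type closed recurrence for it follows, and moreover the convolution $\sum_{k=1}^{n-1}s_{n+1-k}\overline{\alpha}_k$ has unbounded order, so even in the homogeneous case the classical Casoratian product formula would not apply --- this is exactly why your telescoping fails to close, as you then concede. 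For a realistic route, note that the closest \emph{proven} analogue in the paper is the interlaced monotonicity $1\leqslant\tilde{\alpha}_{2n+1}\leqslant\tilde{\alpha}_{2n+3}$ and $-1\geqslant\tilde{\alpha}_{2n}\geqslant\tilde{\alpha}_{2n+2}$, established by induction inside the proof of Theorem \ref{T_1x1} for the scalar case; an honest attack on Conjecture \ref{con2} would likely aim at analogous joint two-sided bounds for $a_n$ and $b_n$, or else follow the root-based approach of \cite{GJ} that the authors themselves indicate --- which is closer in spirit to your generating-function step, but requires the sign analysis you have not supplied.
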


We next turn to the case $s_0=s_1=0,\,s_2>0$. This, in turn, has three underlying scenarios which impact the expression of survival probability $\varphi$:

\begin{itemize}
  \item[s.1]  $x_0=0,\,y_0=0,\,x_1>0,\,y_1>0$,
  \item[s.2]  $x_0>0,\,y_0=0,\,y_1=0,\,y_2>0$,
  \item[s.3]  $x_0=0,\,y_0>0,\,x_1=0,\,x_2>0$.
\end{itemize}

Scenarios s.1 and s.2 imply that $s_2-x_2y_0=x_1y_1+x_0y_2>0$, while s.3 implies $s_2-x_2y_0=0$. Under s.1 or s.2 let us define two recurrent sequences:
\begin{align*}
&\hat{\alpha}_0=1,\,
\hat{\alpha}_1=-\frac{1+\frac{\overline{X}(1)y_0}{s_2-x_2y_0}}{\overline{X}(2)y_0+\overline{X}(1)y_1+s_2-
\frac{\overline{X}(1)y_0(s_3-x_3y_0-x_2y_1)}{s_2-x_2y_0}}
=-\frac{1}{\overline{X}(1)y_1+s_2}
,\\
&\hat{\alpha}_n=\frac{1}{s_2}\left(\hat{\alpha}_{n-2}-\sum_{k=1}^{n-1}\hat{\alpha}_k s_{n+2-k}+x_n y_1\hat{\alpha}_1\right) ,\,n=2,3,\,\ldots\\
&\hat{\delta}_0=0,
\,\hat{\delta}_1=\frac{1}{\overline{X}(2)y_0+\overline{X}(1)y_1+s_2-
\frac{\overline{X}(1)y_0(s_3-x_3y_0-x_2y_1)}{s_2-x_2y_0}}
=\frac{1}{\overline{X}(1)y_1+s_2}
,\\
&\hat{\delta}_n=\frac{1}{s_2}\left(\hat{\delta}_{n-2}-\sum_{k=1}^{n-1}\hat{\delta}_k s_{n+2-k}+x_n y_1\hat{\delta}_1\right) ,\,n=2,3,\,\ldots
\end{align*}

\begin{remark}\label{rem2}
We note that s.1 and s.2 accordingly imply $\hat{\alpha}_1=-1/y_1=-\hat{\delta}_1$ and $\hat{\alpha}_1=-1/(x_0y_2)=-\hat{\delta}_1$.
\end{remark}

Under s.3 let us also define two recurrent sequences:
\begin{align*}
&\tilde{\alpha}_1=1,\,
\tilde{\alpha}_2=-\frac{y_0+y_1}{y_0},\,
\tilde{\alpha}_n=\frac{1}{s_2}\left(\tilde{\alpha}_{n-2}-\sum_{k=1}^{n-1}\tilde{\alpha}_k s_{n+2-k}+(x_{n+1}-x_n)y_0\right) ,\,n=3,4,\,\ldots\\
&\tilde{\delta}_1=0,
\,\tilde{\delta}_2=\frac{1}{y_0},\,
\tilde{\delta}_n=\frac{1}{s_2}\left(\tilde{\delta}_{n-2}-\sum_{k=1}^{n-1}\tilde{\delta}_k s_{n+2-k}+x_n\right) ,\,n=3,4,\,\ldots
\end{align*}

\begin{thm}\label{T_1x1}
If $s_0=s_1=0$, $s_2>0$ and $\mathbb{E}S<4$, for the ultimate time survival probability of the bi-seasonal discrete time risk model with income rate two, holds:

Under s.1 or s.2:
\begin{align}\label{1x1}
&(\hat{\alpha}_{n+1}-\hat{\alpha}_{n})\f(0)+(\hat{\delta}_{n+1}-\hat{\delta}_{n})(4-\E S)=\f(n+1)-\f(n),\,n\in\mathbb{N}_0\\ 
\nonumber
&\f(1)=\hat{\alpha}_1\f(0)+\hat{\delta}_1(4-\E S).
\end{align}
Under s.3:
\begin{align}\label{1x1_v1}
\f(0)=0,\,
(\tilde{\alpha}_{n+1}-\tilde{\alpha}_{n})\f(1)+(\tilde{\delta}_{n+1}-\tilde{\delta}_{n})(4-\E S)=\f(n+1)-\f(n),\,n\in\mathbb{N}.
\end{align}
The remaining values of the survival probability are calculated by
\begin{align*}
\varphi(u) = \frac{1}{s_2}\left(\varphi(u-2)- \sum\limits_{k=1}^{n-1} \varphi(k)s_{u+2-k} + (x_{u+1}y_0+x_{u}y_1)\varphi(1)+x_{u}y_0\varphi(2) \right),\,u=2,\,3,\,\ldots
\end{align*}
In addition, $\hat{\alpha}_{n+1}-\hat{\alpha}_{n}\neq0$ for all $n\in\mathbb{N}_0$ and $\tilde{\alpha}_{n+1}-\tilde{\alpha}_{n}\neq0$ for all $n\in\mathbb{N}$.
\end{thm}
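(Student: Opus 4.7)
The strategy is to write $\f(n)$ as an affine combination of the free parameters, which are $\f(0)$ and $4-\E S$ under s.1 or s.2, and $\f(1)$ and $4-\E S$ under s.3, and then to read off (\ref{1x1}) and (\ref{1x1_v1}) by subtracting consecutive values. The master recurrence for $\f(u)$, $u\geqslant 2$, is immediate from (\ref{main_eq}): the assumption $s_0=s_1=0$ kills the two leading terms $s_0\f(u+4)$ and $s_1\f(u+3)$ of $\sum_{k=1}^{u+4}\f(k)s_{u+4-k}$, so the effective leading coefficient becomes $s_2\f(u+2)$; isolating it and shifting $u\mapsto u-2$ produces the displayed formula.

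Under s.3, the assumptions $x_0=x_1=0$ force $X\geqslant 2$ almost surely, so that with $u=0$ one has $W(1)=2-X\leqslant 0$ a.s., whence $\f(0)=0$ directly from the definition of the survival probability. Substituting $\f(0)=0$, $\overline{X}(1)=1$, $\overline{X}(2)=1-x_2$ and the collapsed sum $\sum_{k=1}^{3}\f(k)S(3-k)=\f(1)s_2=\f(1)x_2y_0$ into (\ref{eq:one less}), all $x_2$-dependent pieces cancel and there remains $(y_0+y_1)\f(1)+y_0\f(2)=4-\E S$, which is exactly $\f(2)=\tilde{\alpha}_2\f(1)+\tilde{\delta}_2(4-\E S)$. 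An induction on $n$ using the master recurrence together with the matching recurrences for $\tilde{\alpha}_n,\tilde{\delta}_n$ propagates $\f(n)=\tilde{\alpha}_n\f(1)+\tilde{\delta}_n(4-\E S)$ to every $n\geqslant 1$; differencing gives (\ref{1x1_v1}).

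Under s.1 or s.2, the hypothesis $y_0=0$ kills the terms $\overline{X}(2)y_0$ and $\overline{X}(1)y_0$ in (\ref{eq:one less}), and again $\sum_{k=1}^{3}\f(k)S(3-k)=\f(1)s_2$, leaving $\f(0)+(\overline{X}(1)y_1+s_2)\f(1)=4-\E S$, which yields $\f(1)=\hat{\alpha}_1\f(0)+\hat{\delta}_1(4-\E S)$ (matching the simplifications in Remark \ref{rem2}). Evaluating (\ref{main_eq}) at $u=0$ with $s_0=s_1=0$ gives $(s_2-x_2y_0)\f(2)=\f(0)+(x_3y_0+x_2y_1-s_3)\f(1)$; the factor $s_2-x_2y_0$ equals $x_1y_1>0$ under s.1 and $x_0y_2>0$ under s.2, so one can solve for $\f(2)$, substitute the above expression for $\f(1)$, and verify by direct algebra that the resulting coefficients coincide with $\hat{\alpha}_2$ and $\hat{\delta}_2$. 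The same induction then extends $\f(n)=\hat{\alpha}_n\f(0)+\hat{\delta}_n(4-\E S)$ to all $n\geqslant 0$, and differencing produces (\ref{1x1}).

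The main obstacle is the non-vanishing of $\hat{\alpha}_{n+1}-\hat{\alpha}_n$ and $\tilde{\alpha}_{n+1}-\tilde{\alpha}_n$. Low-order computations ($\hat{\alpha}_0=1$, $\hat{\alpha}_1<0$, $\hat{\alpha}_2>0$, $\hat{\alpha}_3<0$, with an analogous pattern for $\tilde{\alpha}_n$ starting at $n=1$) strongly suggest a strict sign alternation $(-1)^n\hat{\alpha}_n>0$, which would immediately give $\hat{\alpha}_{n+1}\neq\hat{\alpha}_n$. The plan is to establish this alternation by induction on $n$ via the defining recurrence $s_2\hat{\alpha}_n=\hat{\alpha}_{n-2}-\sum_{k=1}^{n-1}\hat{\alpha}_k s_{n+2-k}+x_n y_1\hat{\alpha}_1$, exploiting the fact that $s_0=s_1=0$ restricts the convolution sum to indices $j=n+2-k\geqslant 3$. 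The difficulty is that the sum mixes $\hat{\alpha}_k$ of both parities and therefore has mixed signs, so a bare sign induction will not close; I would strengthen the invariant with a quantitative lower bound on $|\hat{\alpha}_n|$ chosen so that the controlled contributions of $\hat{\alpha}_{n-2}$ and $x_n y_1\hat{\alpha}_1$ dominate the mixed-sign portion of the sum, and then carry out the parallel argument for $\tilde{\alpha}_n$.
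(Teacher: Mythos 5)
Your reconstruction of the affine representations $\f(n)=\hat{\alpha}_n\f(0)+\hat{\delta}_n(4-\E S)$ and $\f(n)=\tilde{\alpha}_n\f(1)+\tilde{\delta}_n(4-\E S)$ is correct and is essentially the paper's argument: the same master recurrence obtained from (\ref{main_eq}) with $s_0=s_1=0$, the same use of (\ref{eq:one less}) for the base cases, the same induction, and the same differencing to get (\ref{1x1}) and (\ref{1x1_v1}). Your base step under s.1/s.2 is in fact slightly cleaner than the paper's: the paper first writes $\f(2)=\frac{1}{\overline{X}(1)y_0}\left(-\f(0)-(\overline{X}(2)y_0+\overline{X}(1)y_1+s_2)\f(1)+4-\E S\right)$ from (\ref{eq:one less}) and equates it with the $u=0$ instance of (\ref{main_eq}), only afterwards invoking $y_0=0$ — formally a division by zero under s.1/s.2 — whereas you impose $y_0=0$ at the outset, so (\ref{eq:one less}) yields $\f(1)=\hat{\alpha}_1\f(0)+\hat{\delta}_1(4-\E S)$ directly without ever solving for $\f(2)$.

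The genuine gap is the final claim, $\hat{\alpha}_{n+1}-\hat{\alpha}_n\neq0$ and $\tilde{\alpha}_{n+1}-\tilde{\alpha}_n\neq0$, which is part of the theorem statement and which you leave as a plan with an acknowledged unresolved difficulty. Your diagnosis of the difficulty is right — bare sign alternation does not close because the convolution sum mixes parities — but the repair you propose (a quantitative lower bound on $|\hat{\alpha}_n|$) is not the missing idea and by itself still gives no usable one-sided control of $-\sum_{k}\hat{\alpha}_k s_{n+2-k}$. What closes the induction in the paper (carried out for $\tilde{\alpha}_n$, case s.3) is \emph{monotonicity within each parity class} together with the magnitude bound: $1\leqslant\tilde{\alpha}_{2n+1}\leqslant\tilde{\alpha}_{2n+3}$ and $-1\geqslant\tilde{\alpha}_{2n}\geqslant\tilde{\alpha}_{2n+2}$. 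One splits the sum $\sum_{k}s_{2n+4-k}\tilde{\alpha}_k$ into odd- and even-indexed blocks, bounds each block by its extremal member (the odd block by $\tilde{\alpha}_1=1$, the even block by the current extreme $\tilde{\alpha}_{2n}$), and uses $s_3+s_4+\cdots\leqslant1$ to conclude $\tilde{\alpha}_{2n+2}\leqslant\tilde{\alpha}_{2n}$ and dually $\tilde{\alpha}_{2n+3}\geqslant\tilde{\alpha}_{2n+1}$; sign alternation is then a corollary of the invariant, not the induction hypothesis, and the base step requires the explicit evaluations $\tilde{\alpha}_3\geqslant1$ and $\tilde{\alpha}_4\leqslant\tilde{\alpha}_2$ which the paper computes. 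For s.1/s.2 the paper does not prove non-vanishing in-house at all: it observes that under $y_0=0$ the recurrences reduce (with $\hat{\alpha}_1=-1/y_1$ resp.\ $\hat{\alpha}_1=-1/(x_0y_2)$) to sequences already treated in \cite{DS} and \cite{GS}, and cites those proofs. To complete your argument you should either adopt the monotone-parity invariant in place of your vague strengthened bound, or cite the corresponding known results.
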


\begin{remark}
As commented in Remark \ref{remark1}, the practical application of Theorem \ref{T_1x1} is based on $\f(n+1)-\f(n)\approx0$ when $n$ is sufficiently large. 
\end{remark}

We now turn to the last case of $\varphi$ dependencies on r.v. $S$, which is $s_0=s_1=s_2=0$ and $s_3>0$. This, in turn, has four underlying scenarios, which impact an expressions of $\varphi$:\\
v.1 $x_0=0,\,y_0=0,\,x_1=0,\,y_1>0,\,x_2>0$,\\
v.2 $x_0=0,\,y_0=0,\,y_1=0,\,x_1>0,\,y_2>0$,\\
v.3 $x_0=0,\,y_0>0,\,x_1=0,\,x_2=0,\,x_3>0$,\\
v.4 $x_0>0,\,y_0=0,\,y_1=0,\,y_2=0,\,y_3>0$.

Formulas of $\f$ under scenarios form v.1 to v.4 dictate a need to evaluate $s_3-x_3y_0$ and $s_3-x_3y_0-x_2y_1$. We do that in the following table:

\begin{center}
\begin{tabular}{|c||c|c|} 
 \hline
 Scenario & $s_3-x_3y_0$ & $s_3-x_3y_0-x_2y_1$ \\ 
  \hline
  \hline
 v.1 & $>0$ & $=0$ \\ 
  \hline
 v.2 & $>0$ & $>0$ \\
 \hline
 v.3 & $=0$ & $=0$ \\
 \hline
 v.4 & $>0$ & $>0$ \\
 \hline
\end{tabular}
\end{center}

\begin{thm}\label{T_0x0}
If $s_0=s_1=s_2=0$, $s_3>0$ and $\mathbb{E}S<4$, for the ultimate time survival probability of the bi-seasonal discrete time risk model with income rate two, holds:

\begin{align*}
&\text{Under v.2 or v.4: }
\f(0)=\frac{4-\E S}{1+\frac{\overline{X}(1)y_1}{x_1y_2+x_0y_3}},\,
\f(1)=\frac{\f(0)}{x_1y_0+x_0y_3},\\
&\text{Under v.1: } \f(0)=0,\,\f(1)=\frac{4-\mathbb{E}S}{y_1},\\
&\text{Under v.3: } \f(0)=\f(1)=0,\, \f(2)=\frac{4-\mathbb{E}S}{y_0}.
\end{align*}
The remaining values of the survival probability are calculated by
\begin{align*}
&\f(u)=\frac{1}{s_3}\left(\f(u-1)+(x_{u+2}y_0+x_{u+1}y_1)\f(1)+x_{u+1}y_0\f(2)
-\sum_{k=1}^{u-1}s_{u+3-k}\f(k)
\right),\,u=2,\,3,\ldots
\end{align*}
\end{thm}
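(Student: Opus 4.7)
The plan is to reduce (\ref{main_eq}) at $u=0$ and, when necessary, at $u=1$, combine with the total-probability identity (\ref{eq:one less}), and exploit the hypothesis $s_0=s_1=s_2=0$ together with the scenario-specific vanishing of the mass products $x_iy_j$ listed for v.1--v.4. The recursion for $\f(u)$, $u\geqslant 2$, then follows routinely from (\ref{main_eq}) shifted by one index.

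Setting $u=0$ in (\ref{main_eq}) and noting that $s_0=s_1=s_2=0$ kills every term in the convolution except the one with $k=1$, I obtain
\[
\f(0)=(s_3-x_3y_0-x_2y_1)\f(1)-x_2y_0\f(2).
\]
Simultaneously, $S(0)=S(1)=S(2)=0$ collapses (\ref{eq:one less}) into
\[
\f(0)+\bigl(\overline{X}(2)y_0+\overline{X}(1)y_1\bigr)\f(1)+\overline{X}(1)y_0\f(2)=4-\E S.
\]
These two linear relations, read through the preceding table classifying $s_3-x_3y_0-x_2y_1$ in each of v.1--v.4, are the backbone of the argument. In v.2 and v.4 the hypothesis $y_0=y_1=0$ annihilates the $\f(1)$- and $\f(2)$-coefficients in (\ref{eq:one less}), giving $\f(0)=4-\E S$; substituting into the $u=0$ equation, where $x_2y_0=x_2y_1=x_3y_0=0$, reduces $s_3$ to $x_1y_2+x_0y_3$ and yields the stated formula for $\f(1)$. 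In v.1 and v.3 the quantity $s_3-x_3y_0-x_2y_1$ vanishes and $x_2y_0=0$, so the $u=0$ identity forces $\f(0)=0$. For v.1 the identity (\ref{eq:one less}) then collapses to $y_1\f(1)=4-\E S$. For v.3 one more step is needed: plugging $u=1$ into (\ref{main_eq}) and using $s_3=x_3y_0$ together with $s_4=x_3y_1+x_4y_0$, which follows from $x_0=x_1=x_2=0$, shows that every surviving term pairs off and $\f(1)=0$; substituting back into (\ref{eq:one less}) yields $\f(2)=(4-\E S)/y_0$.

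To derive the general recursion, I replace $u$ by $u-1$ in (\ref{main_eq}). Because $s_0=s_1=s_2=0$, the convolution $\sum_{k=1}^{u+3}\f(k)s_{u+3-k}$ truncates to $\sum_{k=1}^{u}\f(k)s_{u+3-k}$, whose terminal $k=u$ term is $\f(u)s_3$. Isolating this term and dividing by $s_3>0$ produces exactly the closed-form recursion stated for $\f(u)$, $u\geqslant 2$.

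The main obstacle is purely combinatorial bookkeeping: in each of v.1--v.4 one has to track which products $x_iy_j$ vanish, verify that $s_3$ and $s_4$ simplify consistently with the table, and confirm that the resulting linear system for $\{\f(0),\f(1)\}$, or $\{\f(0),\f(1),\f(2)\}$ in v.3, is well posed. No analytic input beyond Lemma \ref{lemma}, which is already embedded in the derivation of (\ref{eq:one less}), is required.
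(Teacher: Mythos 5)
Your proposal is correct and takes essentially the same route as the paper's own proof: both combine (\ref{main_eq}) at $u=0$ (and at $u=1$ for v.3, and v.2/v.4) with the limiting identity (\ref{eq:one less}), track which products $x_iy_j$ vanish in each of v.1--v.4, and obtain the recursion for $u\geqslant2$ by isolating the $s_3$-term after the convolution truncates; your only deviation is cosmetic, namely using $y_1=0$ to collapse (\ref{eq:one less}) immediately to $\f(0)=4-\E S$ under v.2/v.4, where the paper first solves the general three-relation system and simplifies with $y_0=0$ afterwards. Incidentally, your denominator $x_1y_2+x_0y_3=s_3-x_3y_0-x_2y_1$ for $\f(1)$ agrees with the paper's derivation and is the correct one (the theorem's printed $x_1y_0+x_0y_3$ appears to be a typo, as it would vanish under v.2).
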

It is easy to see that $s_0=\ldots=s_3=0$ leads to the unsatisfied net profit condition $\mathbb{E}S\geqslant4$. If that happens, the following statement is true. 
\begin{thm}\label{no net}
With the unsatisfied net profit condition 
for the ultimate time survival probability of the bi-seasonal discrete time risk model with income rate two,
holds:
\begin{itemize}
\item $\varphi(u)=0$ for all $u\in\mathbb{N}_0$ if $\mathbb{E}S>4$,
\item $\varphi(u)=0$ for all $u\in\mathbb{N}_0$ if $\mathbb{E}S=4$ and $s_4<1$,
\item  If $\mathbb{E}S=4$ and $s_4=1$, then the following sub-cases arise:
\begin{itemize}
\item[$\diamond$] $\varphi(0)=\varphi(1)=\varphi(2)=0$ and $\varphi(u)=1$ for $u\geqslant3$ if $x_4=y_0=1$,
\item[$\diamond$] $\varphi(0)=\varphi(1)=0$ and $\varphi(u)=1$ for $u\geqslant2$ if $x_3=y_1=1$,
\item[$\diamond$] $\varphi(0)=0$ and $\varphi(u)=1$ for $u\geqslant1$ if $x_2=y_2=1$ or $x_1=y_3=1$ or $x_0=y_4=1$.
\end{itemize}
\end{itemize}
\end{thm}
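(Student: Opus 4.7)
The plan is to read everything off the even-indexed subsequence of $W(t)$. Letting $S_{i}^{*}:=Z_{2i-1}+Z_{2i}$, the r.vs.\ $S_{i}^{*},\,i\in\mathbb{N}$, are i.i.d.\ copies of $S=X+Y$; setting $S_{t}^{**}:=\sum_{i=1}^{t}S_{i}^{*}$ one has
\begin{align*}
W(2t)=u+4t-S_{t}^{**}=u-(S_{t}^{**}-4t),\qquad t\in\mathbb{N}.
\end{align*}
Since survival forces $W(2t)>0$ for every $t\geqslant 1$,
\begin{align*}
\f(u)\leqslant\T\!\left(\sup_{t\geqslant 1}(S_{t}^{**}-4t)<u\right),
\end{align*}
so $\f(u)=0$ for every $u\in\mathbb{N}_0$ as soon as $\sup_{t}(S_{t}^{**}-4t)=+\infty$ almost surely.

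For the first bullet I would just apply the SLLN: $\E S>4$ gives $(S_{t}^{**}-4t)/t\to \E S-4>0$ a.s., hence $W(2t)\to-\infty$ a.s.\ and $\f(u)=0$. For the second bullet the increments $S-4$ are integer-valued, integrable, centered, and not a.s.\ zero (non-degeneracy being ensured by $s_{4}<1$ combined with $\E S=4$). I would invoke the Chung--Fuchs recurrence theorem: the zero-drift integer walk $S_{t}^{**}-4t$ is then recurrent, and recurrence of a non-constant walk on $\mathbb{Z}$ forces $\limsup_{t}(S_{t}^{**}-4t)=+\infty$ a.s. This is the step I see as the main obstacle, because it rules out the a priori possibility that the walk stays bounded above by $u$ without drifting to $-\infty$; the SLLN alone is insufficient here, and recurrent oscillation is what actually kills $\f(u)$.

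For the third bullet, $\E S=4$ together with $s_{4}=1$ forces $X+Y\equiv 4$ a.s., and independence and non-negativity of $X,Y\in\mathbb{N}_0$ then force both to be degenerate, so $X\equiv k$ and $Y\equiv 4-k$ for exactly one $k\in\{0,1,2,3,4\}$. Substituting into \eqref{model} gives
\begin{align*}
W(2t)=u\quad\text{and}\quad W(2t-1)=u+2-k,\qquad t\in\mathbb{N},
\end{align*}
so ruin never occurs precisely when $\min\{u,u+2-k\}\geqslant 1$. Reading the threshold off case by case yields the three listed sub-cases: $k=4$ (i.e.\ $x_4=y_0=1$) requires $u\geqslant 3$, $k=3$ (i.e.\ $x_3=y_1=1$) requires $u\geqslant 2$, while $k\in\{0,1,2\}$ all reduce to $u\geqslant 1$ and are therefore grouped together in the statement.
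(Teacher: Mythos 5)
Your proposal is correct, but for the first two bullets it takes a genuinely different route from the paper. The paper argues analytically from its recursion machinery: combining the summed identity \eqref{eq:before_l} with the limit \eqref{expectation} of Lemma \ref{lemma} yields \eqref{eq:impl_zero}, whose left side is non-negative; for $\mathbb{E}S>4$ the sign of $(4-\mathbb{E}S)\varphi(\infty)$ forces $\varphi(\infty)=0$ and monotonicity of $\varphi$ finishes, while for $\mathbb{E}S=4$, $s_4<1$ the identity degenerates to \eqref{eq:impl_zero_v1}, a vanishing non-negative combination of $\varphi(0),\dots,\varphi(3)$, after which a case analysis on the smallest atom of $S$ (mirroring the hypotheses of Theorems \ref{T_3x3}--\ref{T_0x0}) forces the relevant initial values to zero and the recursion \eqref{main_eq} propagates zeros to all $u$. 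You instead embed the even-time skeleton $W(2t)=u-(S^{**}_t-4t)$ as an i.i.d.\ walk and invoke classical random-walk theory: the SLLN when $\mathbb{E}S>4$, and, in the critical case, Chung--Fuchs recurrence of the centered integer walk together with the Hewitt--Savage-type dichotomy to get $\limsup_t(S^{**}_t-4t)=+\infty$ a.s.; your identification of the non-degeneracy hypothesis ($s_4<1$ with $\mathbb{E}S=4$) is exactly what is needed, and restricting to even times is legitimate since it only bounds $\varphi(u)$ from above by a quantity shown to be zero. Your route is shorter, avoids the support-point bookkeeping entirely, and explains probabilistically why zero drift still precludes survival (recurrent oscillation rather than drift to $-\infty$); its cost is reliance on a nontrivial external theorem, whereas the paper stays elementary and self-contained within identities it has already derived. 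For the third bullet ($\mathbb{E}S=4$, $s_4=1$) your argument --- degeneracy of $X$ and $Y$ via independence, then $W(2t)=u$, $W(2t-1)=u+2-k$, and the threshold $\min\{u,u+2-k\}\geqslant1$ --- is essentially identical to the paper's case list.
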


It is worth mentioning that a similar model to (\ref{model}), with $\kappa=2$ and $X\dist{Y}$, was studied in \cite[Section 2]{GS} where similar recurrent matrices $2\times2$ as in (\ref{2x2}) were obtained. These matrices recently have been studied in \cite{GJ},  where some results on its non-singularity were obtained. Moreover, it was shown in \cite{GJ} that a required initial values of survival probability for a homogeneous discrete time risk model ($X\dist{Y}$) with premium rate two, have expressions via certain roots of probability generating function. It is very likely that the same ideas as in \cite{GJ} are applicable to Conjectures \ref{con1} and \ref{con2}, and also for finding the exact initial values of $\varphi$, which are approximately expressed in Theorems \ref{T_3x3}--\ref{T_1x1}. On the other hand, approximate expressions of $\varphi$ as given in Theorems \ref{T_3x3}--\ref{T_1x1} do not require nor existence of probability generating functions, nor any knowledge on the roots of a certain type of power series. 

\section{Proofs}\label{sec:proofs}

In this section we prove all of the statements formulated in the previous Section \ref{sec:Statements}. We start with an auxiliary lemma on survival probability $\f$. Let us denote $\f(\infty):=\lim_{u\to\infty}\f(u)$.

\begin{lem}\label{lemma}
For the ultimate time survival probability of the bi-seasonal discrete time risk model with income rate two, the following relations hold:
\begin{eqnarray}
&&\varphi(\infty)=1, \text{ if } \mathbb{E}S<4, \label{to1} \\
&&\lim\limits_{v\to \infty}\sum_{k=0}^{v+4}\varphi(k) \overline{S}(v+4-k)=\varphi(\infty)\cdot\mathbb{E}S. \label{expectation}
\end{eqnarray}
\end{lem}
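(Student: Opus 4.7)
The plan is to prove the two relations separately: (\ref{to1}) via the strong law of large numbers together with a continuity-of-probability argument, and (\ref{expectation}) via the tail-sum identity for $\E S$ combined with dominated convergence.

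For (\ref{to1}), although $(Z_i)$ is not i.i.d., grouping in pairs gives $S_{2m}:=\sum_{i=1}^{2m}Z_i=\sum_{j=1}^{m}(Z_{2j-1}+Z_{2j})$, a sum of $m$ independent copies of $X+Y$. The classical SLLN therefore yields $S_{2m}/m\to \E S$ almost surely. Combining this with $Z_{2m+1}/(2m+1)\to 0$ a.s., which follows by Borel--Cantelli from $\sum_{m}\T(X>\varepsilon m)\leqslant \E X/\varepsilon<\infty$, a short computation gives the full SLLN $S_t/t\to \E S/2$ a.s.\ along every $t\in\mathbb{N}$. Under $\E S<4$ this forces $(S_t-2t)/t\to \E S/2-2<0$ a.s., so that $M:=\sup_{t\geqslant 1}(S_t-2t)$ is almost surely finite. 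Since $\f(u)=\T(M<u)$ and $\{M<u\}\uparrow\{M<\infty\}$ as $u\to\infty$, continuity of probability from below gives $\f(\infty)=1$.

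For (\ref{expectation}), I would reindex with $j=v+4-k$ so that the sum becomes $\sum_{j=0}^{v+4}\f(v+4-j)\overline{S}(j)$. The standard identity $\E S=\sum_{j=0}^{\infty}\T(S>j)=\sum_{j=0}^{\infty}\overline{S}(j)$ for non-negative integer-valued random variables shows that $\overline{S}$ is summable (assuming $\E S<\infty$, which is implicit in the net profit condition). Using $0\leqslant\f\leqslant 1$, the extended summand $\f(v+4-j)\overline{S}(j)\mathbbm{1}_{j\leqslant v+4}$ is dominated by the summable sequence $\overline{S}(j)$ and, for each fixed $j$, converges pointwise to $\f(\infty)\overline{S}(j)$ as $v\to\infty$. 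The dominated convergence theorem on counting measure then yields the limit $\f(\infty)\sum_{j=0}^{\infty}\overline{S}(j)=\f(\infty)\E S$, as claimed.

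The main technical point sits in the first assertion: the SLLN only directly controls the even-indexed partial sums, and one must also verify that $S_t-2t\to-\infty$ along odd $t$. This is bridged by the routine Borel--Cantelli argument for $Z_{2m+1}/(2m+1)\to 0$ a.s., which uses only $\E X<\infty$. Once the full SLLN is in hand the rest of (\ref{to1}) reduces to a monotone-continuity step, and (\ref{expectation}) is essentially a one-line dominated-convergence argument.
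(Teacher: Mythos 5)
Your proof is correct, and in substance it follows the same route the paper relies on --- the difference is that the paper does not actually write the argument out: for (\ref{to1}) it defers line by line to the income-rate-one case in \cite[p.~935--936]{DS}, and for (\ref{expectation}) it appeals to matching lower and upper bounds taken from \cite{GS}. Your pairing device, applying the SLLN to the i.i.d.\ blocks $Z_{2j-1}+Z_{2j}\dist X+Y$ and bridging the odd indices via Borel--Cantelli (legitimate, since $\E X\leqslant\E S<4$ gives $\sum_m\T(X>\varepsilon m)<\infty$), is exactly the standard mechanism behind the cited proof, and the closing step $\f(u)=\T(M<u)\uparrow\T(M<\infty)=1$ is sound. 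Your dominated-convergence treatment of (\ref{expectation}) is equivalent to the squeeze used in \cite{GS}: after your reindexing, the sum is bounded above by $\f(\infty)\sum_{j\geqslant0}\overline{S}(j)$ (as $\f$ is non-decreasing, so $\f(k)\leqslant\f(\infty)$) and below, for each fixed $N$, by $\f(v+4-N)\sum_{j=0}^{N}\overline{S}(j)$; letting $v\to\infty$ and then $N\to\infty$ reproduces your DCT conclusion, so the two write-ups buy the same thing and yours has the merit of being self-contained. One small caveat: the lemma states (\ref{expectation}) with no hypothesis on $\E S$, and the paper later invokes it in Theorem \ref{no net} where $\E S\geqslant4$ and conceivably $\E S=\infty$, whereas your domination step needs $\E S<\infty$. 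The gap is cosmetic: if $\E S=\infty$ and $\f(\infty)>0$, your truncation-at-$N$ lower bound already forces the limit to be $+\infty$, matching $\f(\infty)\cdot\E S$, while if $\f(\infty)=0$ then $\f\equiv0$ by monotonicity and both sides vanish; a one-line remark to this effect would make your proof cover the lemma exactly as stated.
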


\begin{proof}
The first equality (\ref{to1}) is implied by replicating the proof line by line of the bi-seasonal model with income rate one in \cite[p. 935--936]{DS}. 

The remaining equality (\ref{expectation}) follows by the fact that the lower and upper bounds of
\begin{align*}
\lim\limits_{v\to \infty}\sum_{k=0}^{v+4}\varphi(k) \overline{S}(v+4-k)
\end{align*}
are the same, see \cite[p. 4]{GS} or \cite[p. 13]{GS_Math}.
\end{proof}

\begin{proof}[Proof of Theorem \ref{finite}]
Definition of the finite-time ruin probability 
\begin{align*}
\varphi(u,T)= \T \left( \bigcap\limits_{t=1}^{T} \{ W(t) >0 \} \right)
\end{align*}
implies
\begin{align*}
\varphi(u,1)=\T \left( u+2-Z_1 > 0 \right) = X(u+1).
\end{align*}
In the same manner for $T=2$
\begin{equation*}
\begin{split}
& \varphi(u,2) = \T ( \{ W(1)>0 \} \cap \{ W(2)\}) = \T ( \{Z_1 < u+2 \} \cap \{Z_1 + Z_2 < u+4 \}) \\
&= \T(\{X \leqslant u+1 \} \cap \{ X+Y \leqslant u+3 \})= \sum\limits_{k=0}^{u+1} \T( X = k) \T (Y \leqslant u+3-k) = \sum\limits_{k=0}^{u+1} x_k Y(u+3-k).
\end{split}
\end{equation*}
For $T\geqslant 3$, by the similar arguments as obtaining (\ref{main_eq}), we get
\begin{equation*}
\begin{split}
& \varphi(u,T) = \T \left(\bigcap\limits_{t=1}^{T} \Bigg\{ u+2t - \sum\limits_{i=1}^t Z_i >0 \Bigg\} \right) = \T \left(\bigcap\limits_{t=2}^{T} \Bigg\{ u+2t - \sum\limits_{i=1}^t Z_i >0 \Bigg\} \cap \{ Z_1 <u+2 \} \right) \\
&= \T \left(\bigcap\limits_{t=2}^{T} \Bigg\{ u+2t - \sum\limits_{i=1}^t Z_i >0 \Bigg\} \right) - \T \left(\bigcap\limits_{t=2}^{T} \Bigg\{ u+2t - \sum\limits_{i=1}^t Z_i >0 \Bigg\} \cap \{ Z_1 \geqslant u+2 \} \right) \\
& = \T \left(\bigcap\limits_{t=3}^{T} \Bigg\{ u+2t -Z_1 -Z_2- \sum\limits_{i=3}^t Z_i >0 \Bigg\} \cap \{ Z_1+Z_2 <u+4 \} \right) \\
&- \T \left(\bigcap\limits_{t=3}^{T} \Bigg\{ u+2t -Z_1 -Z_2- \sum\limits_{i=3}^t Z_i >0 \Bigg\} \cap\{Z_1 \geqslant u+2\} \cap \{ Z_1+Z_2 <u+4 \} \right) \\
&= \sum\limits_{k=0}^{u+3} \T (Z_1 +Z_2 =k) \T \left(\bigcap\limits_{t-2=1}^{T-2} \Bigg\{ u+2(t-2)+4 -k - \sum\limits_{i=1}^{t-2} Z_i >0 \Bigg\} \cap \{ Z_1+Z_2 =k \} \right) \\
&- (x_{u+2}y_1+x_{u+3}y_0) \T \left(\bigcap\limits_{t=1}^{T-2} \Bigg\{ 1+2t - \sum\limits_{i=1}^t Z_i >0 \Bigg\} \right) -x_{u+2}y_0 \T \left(\bigcap\limits_{t=1}^{T-2} \Bigg\{ 2+2t - \sum\limits_{i=1}^t Z_i >0 \Bigg\} \right)\\
&= \sum\limits_{k=0}^{u+3} \varphi(u+4-k,T-2)s_k - (x_{u+2}y_1+x_{u+3}y_0) \varphi(1,T-2) - x_{u+2}y_0 \varphi(2,T-2).
\end{split}
\end{equation*}
\end{proof}

\begin{proof}[Proof of Theorem \ref{T_3x3}]
Let $\alpha_n$, $\beta_n$, $\gamma_n$ and $\delta_n$ be the recurrent sequences defined prior to Theorem \ref{T_3x3}. We aim to show that for all $n\in\mathbb{N}_0$ 
\begin{equation}\label{eq}
\varphi(n)= \alpha_n \varphi(0) + \beta_n \varphi(1) + \gamma_n \varphi(2) + \delta_n (4- \E S).
\end{equation}
If $n=0,\,1$ or $n=2$, the statement is evident. If $n=3$, the relation results from (\ref{eq:one less})
\begin{equation*}
\begin{split}
&\varphi(3) = -\frac{1}{s_0} \varphi(0) -\frac{\overline{X}(2)y_0+\overline{X}(1)y_1+S(2)}{s_0} \varphi(1) -\frac{\overline{X}(1)y_0 +S(1)}{s_0}\varphi(2) + \frac{1}{s_0}(4- \E S)\\
&=\alpha_3 \varphi(0) + \beta_3 \varphi(1) + \gamma_3 \varphi(2) + \delta_3 (4- \E S).
\end{split}
\end{equation*}
For $T \geqslant 4$ we use induction. Then, by (\ref{main_eq}) and induction hypothesis

\begin{equation*}
\begin{split}
&\varphi(n) = \frac{1}{s_0}\left(\varphi(n-4) + (x_{n-1}y_0+x_{n-2}y_1)\varphi(1)+x_{n-2}y_0\varphi(2) - \sum\limits_{k=1}^{n-1} \varphi(k)s_{n-k}\right)\\
& = \frac{1}{s_0}(\alpha_{n-4}\varphi(0) +\beta_{n-4}\varphi(1)+\gamma_{n-4}\varphi(2)+\delta_{n-4}(4-\E S) \\
&+ (x_{n-1}y_0+x_{n-2}y_1)(\alpha_{1}\varphi(0) +\beta_{1}\varphi(1)+\gamma_{1}\varphi(2)+\delta_{1}(4-\E S)) \\
&+x_{n-2}y_0(\alpha_{2}\varphi(0) +\beta_{2}\varphi(1)+\gamma_{2}\varphi(2)+\delta_{2}(4-\E S)) \\
&-\sum\limits_{k=1}^{n-1} s_{n-k}(\alpha_{k}\varphi(0) +\beta_{k}\varphi(1)+\gamma_{k}\varphi(2)+\delta_{k}(4-\E S)) \\
&= \varphi(0) \frac{1}{s_0}\left( \alpha_{n-4} - \sum\limits_{k=1}^{n-1} s_{n-k}\alpha_k \right) + \varphi(1) \frac{1}{s_0}\left( \beta_{n-4} -\sum\limits_{k=1}^{n-1} s_{n-k}\beta_k+ x_{n-1}y_0+x_{n-2}y_1 \right) \\
&+ \varphi(2)\frac{1}{s_0}\left( \gamma_{n-4} -\sum\limits_{k=1}^{n-1} s_{n-k}\gamma_k+ x_{n-2}y_1 \right) + (4-\E S)\frac{1}{s_0}\left( \delta_{n-4} -\sum\limits_{k=1}^{n-1} s_{n-k}\delta_k\right) \\
&= \alpha_n \varphi(0) + \beta_n \varphi(1) + \gamma_n \varphi(2) + \delta_n (4- \E S).
\end{split}
\end{equation*}
Consequently, equation (\ref{eq}) holds for all $n \in \mathbb{N}_0$. By determining the differences $\varphi(n+1) - \varphi(n),\\ \varphi(n+2) - \varphi(n)$ and $\varphi(n+3) - \varphi(n)$ we obtain the system (\ref{3x3}) where the remaining equalities in Theorem \ref{T_3x3} are implied by (\ref{eq:one less}) and (\ref{main_eq}).

\end{proof}

\begin{proof}[Proof of Theorem \ref{T_2x2}]
Let $\overline{\alpha}_n, \overline{\beta}_n$ and $\overline{\delta}_n$ be the recurrent sequences defined prior to Theorem \ref{T_2x2}. Then, arguing the same as in proof of Theorem \ref{T_3x3}, we show that 
\begin{equation}\label{eq2}
\varphi(n)= \overline{\alpha}_n \varphi(0) + \overline{\beta}_n \varphi(1) + \overline{\delta}_n (4- \mathbb{E} S),\,n\in\mathbb{N}_0.
\end{equation}
If $n=0$, or $n=1$, the equation is obvious. For $n=2$, it follows by (\ref{eq:one less}) 
\begin{equation*}
\begin{split}
& \varphi(2) = -\frac{1}{s_1 + \overline{X}(1)y_0} \varphi(0) -\frac{\overline{X}(2)y_0+\overline{X}(1)y_1+S(2)}{s_1 + \overline{X}(1)y_0} \varphi(1) + \frac{1}{s_1 + \overline{X}(1)y_0}(4- \E S)\\
&=\overline{\alpha}_2 \varphi(0) + \overline{\beta}_2 \varphi(1) + \overline{\delta}_2 (4- \E S)
\end{split}
\end{equation*}
and, for any $n\in\mathbb{N}_0$, induction does its work as in the previous proof and the rest is implied by (\ref{eq:one less}) and (\ref{main_eq}).
\end{proof}

\begin{proof}[Proof of Theorem \ref{T_1x1}]
Let us consider the cases s.1 and s.2. Recall that $\hat{\alpha}_n$ and $\hat{\delta}_n$ are recurrent sequences defined prior to Theorem \ref{T_1x1}. Then, the following equality is true
\begin{equation}\label{eq4t}
\varphi(n)= \hat{\alpha}_n \varphi(0) + \hat{\delta}_n (4- \E S),\,
n\in\mathbb{N}_0.
\end{equation}
Indeed, if $n=0$ the equation is evident, if $n=1$, from (\ref{eq:one less}) we get
\begin{equation}\label{l1}
\varphi(2) = \frac{1}{\overline{X}(1)y_0}\left( -\varphi(0) -(\overline{X}(2)y_0+\overline{X}(1)y_1+s_2)\varphi(1) + 4- \E S \right).
\end{equation}
By setting $n=0$ into (\ref{main_eq}) we get
\begin{equation}\label{l2}
\varphi(2) = \frac{1}{s_2-x_2y_0}\left( \varphi(0) - (s_3 -x_3y_0 -x_2y_1)\varphi(1) \right).
\end{equation}
Equating (\ref{l1}) to (\ref{l2}) and rearranging we obtain
\begin{equation*}
\begin{split}
&\varphi(1)= -\varphi(0)\frac{1+ \frac{\overline{X}(1)y_0}{s_2-x_2y_0}}{\overline{X}(2)y_0+\overline{X}(1)y_1+s_2 -\frac{\overline{X}(1)y_0(s_3 -x_3y_0 -x_2y_1)}{s_2-x_2y_0}}\\
&+\frac{4-\E S}{\overline{X}(2)y_0+\overline{X}(1)y_1+s_2 -\frac{\overline{X}(1)y_0(s_3 -x_3y_0 -x_2y_1)}{s_2-x_2y_0}} = \hat{\alpha}_1 \varphi(0) + \hat{\delta}_1 (4-\E S).
\end{split}
\end{equation*}
Observing that $y_0=0$ under scenarios s.1 and s.2, we confirm the equation (\ref{eq4t}) for $n=1$. 

For $n \geqslant 2$, eq. (\ref{eq4t}) follows by mathematical induction the same way as proving Theorems \ref{T_3x3} and \ref{T_2x2}.
By setting the difference $\varphi(n+1) - \varphi(n)$ we obtain the equation defined in Theorem \ref{T_1x1}
$$(\hat{\alpha}_{n+1}-\hat{\alpha}_n)\varphi(0)+(\hat{\delta}_{n+1}-\hat{\delta}_n)(4-\mathbb{E}S) = \varphi(n+1)-\varphi(n).$$
Considering the last case s.3 we note that $\varphi(0) = 0$. This is due to the first possible claim (r.v. $X$) at $t=1$ is greater or equal to $2$. For the following survival probabilities $\f(u),\,u\geqslant1$, let $\tilde{\alpha}_n$ and $\tilde{\delta}_n$ be the recurrent sequences defined prior to Theorem 4. Then, the following equality holds for all $n\in\mathbb{N}$ 
\begin{equation}\label{eq4t2}
\varphi(n)= \tilde{\alpha}_n \varphi(1) + \tilde{\delta}_n (4- \mathbb{E} S).
\end{equation}
Indeed, it is evident for $n=1$ and for $n=2$ is is implied by (\ref{eq:one less})
$$
\varphi(2) = -\frac{1}{y_0}\varphi(1)(y_0+y_1) +  \frac{1}{y_0}(4-\E S) = \tilde{\alpha}_2 \varphi(1) + \tilde{\delta}_2 (4-\E S).
$$
For $n \geqslant 3$, it is confirmed by induction and
the rest is evident by setting the difference $\varphi(n+1) - \varphi(n)$ an etc.

It remains to show that we do not divide by zero obtaining an expression of $\varphi(0)$ from eq. (\ref{1x1}) or $\varphi(1)$ from eq. (\ref{1x1_v1}). As Theorem \ref{T_1x1} deals with three underlying scenarios related to where the distribution of $X+Y$ can start not to violate the net profit condition and $s_0=s_1=0, s_2>0$, we have three slightly different types of recurrent sequences to go trough. Under scenario s.1 we have

\begin{align*}
\hat{\alpha}_0=1,\,\hat{\alpha}_1=-\frac{1}{y_1},\,
\hat{\alpha}_n=\frac{1}{s_2}\left(\hat{\alpha}_{n-2}-\sum_{k=1}^{n-1}\hat{\alpha}_k s_{n+2-k}-x_n\right) ,\,n=2,3,\,\ldots,
\end{align*}
while s.2 implies
\begin{align*}
\hat{\alpha}_0=1,\,\hat{\alpha}_1=-\frac{1}{x_0y_2},\,
\hat{\alpha}_n=\frac{1}{s_2}\left(\hat{\alpha}_{n-2}-\sum_{k=1}^{n-1}\hat{\alpha}_k s_{n+2-k}\right) ,\,n=2,3,\,\ldots,
\end{align*}

The proof of $\hat{\alpha}_{n+1}-\hat{\alpha}_{n}\neq0$ for all $n\in\mathbb{N}_0$ under s.1 or s.2 is almost identical to the one given in \cite[p. 937]{DS} or \cite[p. 16]{GS} accordingly.

Under s.3 we have
\begin{align*}
\tilde{\alpha}_1=1,\,\tilde{\alpha}_2=-\frac{y_0+y_1}{y_0},\,
\tilde{\alpha}_n=\frac{1}{s_2}\left(\tilde{\alpha}_{n-2}-\sum_{k=1}^{n-1}\tilde{\alpha}_k s_{n+2-k}+(x_{n+1}-x_n)y_0\right) ,\,n=3,4,\,\ldots
\end{align*}
The property $\tilde{\alpha}_{n+1}-\tilde{\alpha}_{n}\neq0$ for $n\in\mathbb{N}$
is implied by
$1\leqslant\tilde{\alpha}_{2n+1}\leqslant\tilde{\alpha}_{2n+3}$ and
$-1\geqslant\tilde{\alpha}_{2n}\geqslant\tilde{\alpha}_{2n+2}$. To verify the last inequalities we use induction. For $n=1$ we have that
\begin{align*}
\tilde{\alpha}_{4}=\frac{\tilde{\alpha}_{2}-\sum_{k=1}^{3}\tilde{\alpha}_{k}s_{6-k}+(x_5-x_4)y_0}{s_2}\leqslant\frac{\tilde{\alpha}_{2}-\tilde{\alpha}_{2}s_4+x_5y_0}{s_2}
\leqslant\tilde{\alpha}_{2}\leqslant-1
\end{align*}
and
\begin{align*}
\tilde{\alpha}_{3}
=\frac{1-\tilde{\alpha}_{1}s_4-\tilde{\alpha}_{2}s_3+(x_4-x_3)y_0}{s_2}
=\frac{1}{y_0}\left(\frac{1}{x_2}-y_2+y_1+\frac{y_1^2}{y_0}\right)
\geqslant 1=\tilde{\alpha}_{1}.
\end{align*}
For arbitrary $n\in\mathbb{N}$, under induction hypothesis, it follows
\begin{align*}
&\tilde{\alpha}_{2n+2}=\frac{1}{s_2}\left(\tilde{\alpha}_{2n}-\sum_{k=1}^{2n+1}s_{2n+4-k}\tilde{\alpha}_{k}
+(x_{2n+3}-x_{2n+2})y_0
\right)
\\
&=\frac{1}{s_2}\left(\tilde{\alpha}_{2n}-\left(s_3\tilde{\alpha}_{2n+1}+\ldots+s_{2n+3}\tilde{\alpha}_{1}\right)
-\left(s_4\tilde{\alpha}_{2n}+\ldots+s_{2n+2}\tilde{\alpha}_{2}\right)
+(x_{2n+3}-x_{2n+2})y_0\right)
\\
&\leqslant\frac{1}{s_2}\left(\tilde{\alpha}_{2n}-\tilde{\alpha}_1(s_3+\ldots+s_{2n+3})-\tilde{\alpha}_{2n}(s_4+\ldots+s_{2n+2})
+x_{2n+3}y_0\right)\\
&\leqslant\frac{1}{s_2}\left(\tilde{\alpha}_{2n}(1-s_4-\ldots-s_{2n+2})-\tilde{\alpha}_{1}(s_3+\ldots+s_{2n+1})\right)\leqslant\tilde{\alpha}_{2n}
\end{align*}
and
\begin{align*}
&\tilde{\alpha}_{2n+3}=\frac{1}{s_2}\left(\tilde{\alpha}_{2n+1}-\sum_{k=1}^{2n+2}s_{2n+5-k}\tilde{\alpha}_{k}
+(x_{2n+4}-x_{2n+3})y_0\right)
\\
&=\frac{1}{s_2}\left(\tilde{\alpha}_{2n+1}-\left(s_4\tilde{\alpha}_{2n+1}+\ldots+s_{2n+4}\tilde{\alpha}_{1}\right)
-\left(s_3\tilde{\alpha}_{2n+2}+\ldots+s_{2n+3}\tilde{\alpha}_{2}\right)
+(x_{2n+4}-x_{2n+3})y_0\right)
\\
&\geqslant\frac{1}{s_2}\left(\tilde{\alpha}_{2n+1}-\tilde{\alpha}_{2n+1}(s_4+\ldots+s_{2n+4})-x_{2n+3}y_0\right)
\geqslant\tilde{\alpha}_{2n+1}.
\end{align*}
\end{proof}

It is true that we can avoid differences of $\f(n+1)-\f(n)$ and etc. in Theorems \ref{T_3x3}--\ref{T_1x1}. Instead of that, we can utilize expressions (\ref{eq}), (\ref{eq2}), (\ref{eq4t}) accordingly and obtain needed initial values $\f(0),\,\f(1),\,\f(2)$ and etc. based on $\f(n)\approx1$ if $n$ is sufficiently large. However, for some slowly increasing $\f(n)$  
the assumption $\f(n+1)-\f(n)\approx0$ seems to be more accurate than $\f(n)\approx1$. Some thoughts on that are given in \cite[Sec. 5]{GS}. 

\begin{proof}[Proof of Theorem \ref{T_0x0}]
We start with the cases v.2 and v.4. For $u=0$ or $u=1$, the eq. (\ref{main_eq}) accordingly imply
\begin{align*}
\varphi(1) = \frac{\varphi(0)}{s_3 - x_3y_0 - x_2y_1},\,
\varphi(2)= \frac{\varphi(1) (1-s_4+x_4y_0 + x_3y_1)}{s_3-x_3y_0}.
\end{align*}
From this
$$
\varphi(2)= \frac{\varphi(0) (1-s_4+x_4y_0 + x_3y_1)}{(s_3-x_3y_0)(s_3-x_3y_0-x_2y_1)}.
$$
Finally, equality (\ref{eq:one less}) can be rewritten as
$$
\varphi(0) + \varphi(0)\frac{\overline{X}(2)y_0 + \overline{X}(1)y_1}{s_3 - x_3y_0 - x_2y_1} + \varphi(0)\frac{\overline{X}(1)y_0(1-s_4+x_4y_0 + x_3y_1)}{(s_3-x_3y_0)(s_3-x_3y_0-x_2y_1)}=4-\E S.
$$
Therefore
$$
\varphi(0) = \frac{4-\E S}{1+\frac{\overline{X}(2)y_0 + \overline{X}(1)y_1}{s_3 - x_3y_0 - x_2y_1} + \frac{\overline{X}(1)y_0(1-s_4+x_4y_0 + x_3y_1)}{(s_3-x_3y_0)(s_3-x_3y_0-x_2y_1)}}
=\frac{4-\E S}{1+\frac{\overline{X}(1)y_1}{x_1y_2+x_0y_3}}
$$
as $y_0=0$ under v.2 and v.4.

For the next case v.1, if $u=0$, from eq. (\ref{main_eq}) we have
$$
\varphi(0) = \varphi(1)(s_3-x_2y_1) = 0
$$
so the eq. (\ref{eq:one less}) gives
$$
\varphi(1)=\frac{4-\E S}{y_1}.
$$
The result $\varphi(0)=0$ is natural due to $\min X=2$.

For the last case v.3, if $u=0$, from (\ref{main_eq}) we get
$$
\varphi(0) = \varphi(1)(s_3-x_3y_0) = 0
$$
and, if $u=1$,
$$
\varphi(1) = \varphi(1)s_4 +\varphi(2)s_3 -\varphi(1)(x_4y_0+x_3y_1)-\varphi(2)x_3y_0
$$
therefore
$$
\varphi(1)(1-s_4+x_4y_0+x_3y_1) = \varphi(2)(s_3- x_3y_0)=0.
$$
Consequently, eq. (\ref{eq:one less}) implies
$$
\varphi(2) = \frac{4-\E S}{y_0}.
$$
Also, $\varphi(0)=\varphi(1)=0$ is because of $\min X=3$.

The remaining survival probabilities $\f(u),\,u\geqslant2$, are implied by (\ref{main_eq}). 
\end{proof}

\begin{proof}[Proof of Theorem \ref{no net}]
The first case, $\varphi(u)=0$ for all $u\in\mathbb{N}_0$ if $\mathbb{E}S>4$, is implied by combining eq. (\ref{eq:before_l}) with (\ref{expectation}), which gives
\begin{align}\label{eq:impl_zero}
\f(0)+\left(\overline{X}(2)y_0+\overline{X}(1)y_1\right)\f(1)+\overline{X}(1)y_0\f(2)+\sum_{k=1}^{3}\f(k)S(3-k)=(4-\E S)\cdot\varphi(\infty). 
\end{align}

As the left hand side of (\ref{eq:impl_zero}) is non-negative and $4-\mathbb{E}S<0$, then $\varphi(\infty)=0$, and consequently $\varphi(u)=0$ for all $u\in\mathbb{N}_0$ as $\varphi$ is non-decreasing.

For the second case, $\varphi(u)=0$ for all $u\in\mathbb{N}_0$ if $\mathbb{E}S=4$ and $s_4<1$, we observe that (\ref{eq:impl_zero}) becomes
\begin{align}\label{eq:impl_zero_v1}
\f(0)+(x_0y_2+y_0+y_1)\f(1)+(x_0y_1+y_0)\f(2)+x_0y_0\f(3)=0. 
\end{align}
The rest for this second case is concluded in the same manner as Theorems \ref{T_3x3}--\ref{T_0x0} are structured. Indeed, if $s_0>0$ as in Theorem \ref{T_3x3}, then eq. (\ref{eq:impl_zero_v1}) implies $\f(0)=\ldots=\f(3)=0$ and $\f(u)=0$ for $u\geqslant4$ follows by eq. (\ref{main_eq}). If $s_0=0$ and $s_1>0$ (as in Theorem \ref{T_2x2}), which consists from two underlying cases $x_0=0,\,y_0>0,\,x_1>0$ or $x_0>0,\,y_0=0,\,y_1>0$, then (\ref{eq:impl_zero_v1}) implies $\f(0)=\ldots=\f(2)=0$ and eq. (\ref{main_eq}) does the rest. Two remaining dependencies $s_0=s_1=0,\,s_2>0$ and $s_0=s_1=s_2=0,\,s_3>0$ follow by the same arguments.

The third and last case on $\varphi$ expressions when $\mathbb{E}S=4$ and $s_4=x_4y_0+x_3y_1+x_2y_2+x_1y_3+x_0y_4=1$ is implied by the definition of the bi-seasonal discrete time risk model with income rate two (see (\ref{model})). Then the modeled function $W(t)$ equals:
\begin{itemize}
    \item $W(t)=u-2\cdot\mathbbm{1}_{\{t\text{ is odd}\}}$ if $x_4=y_0=1$, 
    \item $W(t)=u-1\cdot\mathbbm{1}_{\{t\text{ is odd}\}}$ if $x_3=y_1=1$,
    \item $W(t)=u$ if $x_2=y_2=1$,
    \item $W(t)=u+1\cdot\mathbbm{1}_{\{t\text{ is odd}\}}$ if $x_1=y_3=1$,
    \item $W(t)=u+2\cdot\mathbbm{1}_{\{t\text{ is odd}\}}$ if $x_0=y_4=1$,
\end{itemize}
The proof follows identifying when $W(t)>0$ for all $t=1,2,\,\ldots$

\bigskip

Similar thoughts as given in this proof appear in \cite[p. 935]{DS}.
\end{proof}

\section{Numerical examples}\label{sec:calculations}
In this section, using program \cite{Mathematica}, we demonstrate numerical outputs of Theorems from Section \ref{sec:Statements} assuming that r.vs., generating the bi-seasonal discrete time risk model with income rate two, follow the {\it displaced Poisson distribution} $\mathcal{P}(\lambda,\xi)$ with parameters $\lambda>0$ and $\xi\in\mathbb{N}_0$, which PDF is
\begin{align*}
\T(X=m)=e^{-\lambda}\frac{\lambda^{m-\xi}}{(m-\xi)!},\,
m=\xi,\,\xi+1,\,\dots
\end{align*}
It can be verified that $\E X=\lambda+\xi$ and $X+Y\sim\mathcal{P}(\lambda_1+\lambda_2,\xi_1+\xi_2)$ if 
$X\sim\mathcal{P}(\lambda_1,\xi_1)$, 
$Y\sim\mathcal{P}(\lambda_2,\xi_2)$ and $X$, $Y$ are independent. See \cite{Staff} for more information on displaced Poisson distribution. 

All of the output tables of $\varphi(u,T)$ and $\varphi(u)$ below are structured as follows: the present survival probabilities are rounded up to three decimal places except when the numbers are 0 or 1; parameters $T$ and $u$ are chosen to reflect changes of survival probabilities; the size of $n=150$ is considered as high enough to reach a sufficient accuracy when Theorems \ref{T_3x3}--\ref{T_1x1} are employed to find a needed initial values of $\f$.   

\begin{ex}\label{ex1}
Let $X\sim\mathcal{P}(1,0)$ and $Y\sim\mathcal{P}(2,0)$. Using Theorem \ref{finite} and Theorem \ref{T_3x3} we obtain the following table
\end{ex}
\begin{table}[H]
\centering
\caption{Finite and ultimate time survival probabilities with r.vs. from Example \ref{ex1}.}\label{tab2}
\begin{tabular}{ccccccccc}
\toprule
\boldmath{$T\backslash u$}&\boldmath{$0$}&\boldmath{$1$}&\boldmath{$2$}&\boldmath{$3$}&\boldmath{$4$}&\boldmath{$5$}&\boldmath{$10$}&\boldmath{$15$}\\
\midrule
$1$&$0.736$&$0.920$&$0.981$&$0.996$&$0.999$&$1$&$1$&$1$\\
$2$&$0.564$&$0.788$&$0.909$&$0.965$&$0.988$&$0.996$&$1$&$1$\\
$3$&$0.547$&$0.771$&$0.898$&$0.959$&$0.985$&$0.995$&$1$&$1$\\
$4$&$0.505$&$0.727$&$0.863$&$0.936$&$0.972$&$0.989$&$1$&$1$\\
$5$&$0.499$&$0.720$&$0.857$&$0.932$&$0.969$&$0.987$&$1$&$1$\\
$10$&$0.46$&$0.673$&$0.813$&$0.898$&$0.946$&$0.972$&$0.999$&$1$\\
$20$&$0.446$&$0.656$&$0.795$&$0.882$&$0.933$&$0.962$&$0.998$&$1$\\
$30$&$0.443$&$0.652$&$0.791$&$0.878$&$0.930$&$0.960$&$0.998$&$1$\\
$40$&$0.443$&$0.651$&$0.790$&$0.877$&$0.929$&$0.959$&$0.997$&$1$\\
$50$&$0.442$&$0.650$&$0.790$&$0.876$&$0.928$&$0.959$&$0.997$&$1$\\
$\infty$&$0.442$&$0.650$&$0.790$&$0.876$&$0.928$&$0.958$&$0.997$&$1$\\
\bottomrule
\end{tabular}
\end{table}

\begin{ex}\label{ex2}
Let $X\sim\mathcal{P}(1,1)$ and $Y\sim\mathcal{P}(19/10,0)$. Using Theorem \ref{finite} and Theorem \ref{T_2x2} we obtain the following table
\end{ex}
\begin{table}[H]
\centering
\caption{Finite and ultimate time survival probabilities with r.vs. from Example \ref{ex2}.}\label{tab3}
\begin{tabular}{ccccccccccc}
\toprule
\boldmath{$T\backslash u$}&\boldmath{$0$}&\boldmath{$1$}&\boldmath{$2$}&\boldmath{$3$}&\boldmath{$4$}&\boldmath{$5$}&\boldmath{$10$}&\boldmath{$20$}&\boldmath{$30$}&\boldmath{$40$}\\
\midrule
$1$&$0.368$&$0.736$&$0.920$&$0.981$&$0.996$&$0.999$&$1$&$1$&$1$&$1$\\
$2$&$0.259$&$0.581$&$0.803$&$0.919$&$0.970$&$0.990$&$1$&$1$&$1$&$1$\\
$3$&$0.223$&$0.518$&$0.743$&$0.877$&$0.947$&$0.979$&$1$&$1$&$1$&$1$\\
$4$&$0.192$&$0.458$&$0.677$&$0.823$&$0.910$&$0.957$&$1$&$1$&$1$&$1$\\
$5$&$0.177$&$0.428$&$0.641$&$0.791$&$0.886$&$0.942$&$0.999$&$1$&$1$&$1$\\
$10$&$0.130$&$0.324$&$0.505$&$0.652$&$0.765$&$0.847$&$0.990$&$1$&$1$&$1$\\
$20$&$0.098$&$0.248$&$0.396$&$0.525$&$0.634$&$0.724$&$0.951$&$1$&$1$&$1$\\
$30$&$0.084$&$0.214$&$0.343$&$0.460$&$0.562$&$0.649$&$0.908$&$0.998$&$1$&$1$\\
$40$&$0.076$&$0.193$&$0.311$&$0.419$&$0.515$&$0.599$&$0.869$&$0.994$&$1$&$1$\\
$50$&$0.070$&$0.179$&$0.289$&$0.390$&$0.481$&$0.562$&$0.837$&$0.989$&$1$&$1$\\
$100$&$0.057$&$0.144$&$0.234$&$0.318$&$0.395$&$0.465$&$0.731$&$0.952$&$0.995$&$1$\\
$\infty$&$0.037$&$0.094$&$0.152$&$0.208$&$0.259$&$0.307$&$0.506$&$0.748$&$0.872$&$0.935$\\
\bottomrule
\end{tabular}
\end{table}
\begin{ex}\label{ex3}
Let $X\sim\mathcal{P}(1,1)$ and $Y\sim\mathcal{P}(9/10,1)$. Using Theorem \ref{finite} and Theorem \ref{T_1x1} we obtain the following table
\end{ex}

\begin{table}[H]
\centering
\caption{Finite and ultimate time survival probabilities with r.vs. from Example \ref{ex3}.}\label{tab4}
\begin{tabular}{ccccccccccc}
\toprule
\boldmath{$T\backslash u$}&\boldmath{$0$}&\boldmath{$1$}&\boldmath{$2$}&\boldmath{$3$}&\boldmath{$4$}&\boldmath{$5$}&\boldmath{$10$}&\boldmath{$20$}&\boldmath{$30$}&\boldmath{$40$}\\
\midrule
$1$&$0.368$&$0.736$&$0.920$&$0.981$&$0.996$&$0.999$&$1$&$1$&$1$&$1$\\
$2$&$0.284$&$0.629$&$0.850$&$0.950$&$0.986$&$0.996$&$1$&$1$&$1$&$1$\\
$3$&$0.237$&$0.552$&$0.784$&$0.910$&$0.967$&$0.989$&$1$&$1$&$1$&$1$\\
$4$&$0.212$&$0.506$&$0.739$&$0.878$&$0.949$&$0.980$&$1$&$1$&$1$&$1$\\
$5$&$0.191$&$0.466$&$0.695$&$0.844$&$0.927$&$0.968$&$1$&$1$&$1$&$1$\\
$10$&$0.145$&$0.366$&$0.572$&$0.729$&$0.837$&$0.908$&$0.997$&$1$&$1$&$1$\\
$20$&$0.111$&$0.286$&$0.459$&$0.605$&$0.720$&$0.807$&$0.981$&$1$&$1$&$1$\\
$30$&$0.096$&$0.249$&$0.404$&$0.538$&$0.650$&$0.740$&$0.957$&$1$&$1$&$1$\\
$40$&$0.087$&$0.227$&$0.370$&$0.496$&$0.604$&$0.693$&$0.933$&$0.999$&$1$&$1$\\
$50$&$0.081$&$0.212$&$0.346$&$0.466$&$0.570$&$0.658$&$0.910$&$0.998$&$1$&$1$\\
$100$&$0.067$&$0.175$&$0.288$&$0.390$&$0.482$&$0.562$&$0.828$&$0.984$&$0.999$&$1$\\
$\infty$&$0.048$&$0.127$&$0.209$&$0.286$&$0.355$&$0.417$&$0.649$&$0.873$&$0.954$&$0.983$\\
\bottomrule
\end{tabular}
\end{table}

\begin{ex}\label{ex4}
Let $X\sim \mathcal{P}(1/2,2)$ and $Y\sim \mathcal{P}(1/3,1)$. Using Theorem \ref{finite} and Theorem \ref{T_0x0} we obtain the following table
\end{ex}

\begin{table}[H]
\centering
\caption{Finite and ultimate time survival probabilities with r.vs. from Example \ref{ex4}.}\label{tab5}
\begin{tabular}{ccccccccccc}
\toprule
\boldmath{$T\backslash u$}&\boldmath{$0$}&\boldmath{$1$}&\boldmath{$2$}&\boldmath{$3$}&\boldmath{$4$}&\boldmath{$5$}&\boldmath{$10$}&\boldmath{$15$}&\boldmath{$20$}&\boldmath{$25$}\\
\midrule
$1$&$0.607$&$0.910$&$0.986$&$0.998$&$1$&$1$&$1$&$1$&$1$&$1$\\
$2$&$0.435$&$0.797$&$0.948$&$0.990$&$0.998$&$1$&$1$&$1$&$1$&$1$\\
$3$&$0.395$&$0.758$&$0.928$&$0.983$&$0.997$&$0.999$&$1$&$1$&$1$&$1$\\
$4$&$0.346$&$0.700$&$0.894$&$0.969$&$0.992$&$0.998$&$1$&$1$&$1$&$1$\\
$5$&$0.329$&$0.678$&$0.878$&$0.961$&$0.989$&$0.997$&$1$&$1$&$1$&$1$\\
$10$&$0.262$&$0.573$&$0.788$&$0.906$&$0.962$&$0.986$&$1$&$1$&$1$&$1$\\
$20$&$0.219$&$0.494$&$0.705$&$0.838$&$0.916$&$0.959$&$0.999$&$1$&$1$&$1$\\
$30$&$0.202$&$0.459$&$0.662$&$0.799$&$0.884$&$0.936$&$0.998$&$1$&$1$&$1$\\
$40$&$0.192$&$0.439$&$0.637$&$0.773$&$0.862$&$0.918$&$0.996$&$1$&$1$&$1$\\
$50$&$0.186$&$0.426$&$0.620$&$0.756$&$0.846$&$0.905$&$0.994$&$1$&$1$&$1$\\
$100$&$0.173$&$0.398$&$0.583$&$0.716$&$0.808$&$0.872$&$0.985$&$0.999$&$1$&$1$\\
$\infty$&$0.167$&$0.383$&$0.563$&$0.693$&$0.784$&$0.849$&$0.974$&$0.996$&$0.999$&$1$\\
\bottomrule
\end{tabular}
\end{table}

\begin{ex}\label{ex5}
Let $X\sim \mathcal{P}(2,1)$ and $Y\sim \mathcal{P}(1,1)$. Note that $\mathbb{E}S=5>4$ in the case under consideration. Using Theorem \ref{finite} and Theorem \ref{no net} we obtain the following table
\end{ex}

\begin{table}[H]
\centering
\caption{Finite time survival probabilities with r.vs. from Example \ref{ex5}.}\label{tab6}
\begin{tabular}{cccccccccccc}
\toprule
\boldmath{$T\backslash u$}&\boldmath{$0$}&\boldmath{$1$}&\boldmath{$2$}&\boldmath{$3$}&\boldmath{$4$}&\boldmath{$5$}&\boldmath{$10$}&\boldmath{$20$}&\boldmath{$30$}&\boldmath{$40$}&\boldmath{$50$}\\
\midrule
$1$&$0.135$&$0.406$&$0.677$&$0.857$&$0.947$&$0.983$&$1$&$1$&$1$&$1$&$1$\\
$2$&$0.100$&$0.324$&$0.581$&$0.782$&$0.903$&$0.962$&$1$&$1$&$1$&$1$&$1$\\
$3$&$0.054$&$0.194$&$0.391$&$0.589$&$0.750$&$0.862$&$0.998$&$1$&$1$&$1$&$1$\\
$4$&$0.045$&$0.166$&$0.343$&$0.532$&$0.696$&$0.820$&$0.996$&$1$&$1$&$1$&$1$\\
$5$&$0.029$&$0.112$&$0.243$&$0.401$&$0.558$&$0.696$&$0.982$&$1$&$1$&$1$&$1$\\
$10$&$0.010$&$0.042$&$0.099$&$0.179$&$0.278$&$0.388$&$0.854$&$1$&$1$&$1$&$1$\\
$20$&$0.002$&$0.008$&$0.020$&$0.038$&$0.065$&$0.102$&$0.417$&$0.946$&$0.999$&$1$&$1$\\
$30$&$0$&$0.002$&$0.005$&$0.010$&$0.017$&$0.029$&$0.161$&$0.723$&$0.978$&$1$&$1$\\
$40$&$0$&$0.001$&$0.001$&$0.003$&$0.005$&$0.008$&$0.058$&$0.439$&$0.874$&$0.991$&$1$\\
$50$&$0$&$0$&$0$&$0.001$&$0.002$&$0.003$&$0.020 $&$0.228$&$0.674$&$0.943$&$0.996$\\
$100$&$0$&$0$&$0$&$0$&$0$&$0$&$0$&$0.003$&$0.035$&$0.173$&$0.461$\\
$\infty$&$0$&$0$&$0$&$0$&$0$&$0$&$0$&$0$&$0$&$0$&$0$\\
\bottomrule
\end{tabular}
\end{table}

As a general overview on the results of survival probabilities present in the above Examples \ref{ex1}--\ref{ex5}, it can be commented that the difference of $\mathbb{E}S$ to $4$ makes a high impact on the likelihood of survival. Example \ref{ex1} shows that expenses, represented as random claims, which are not too harsh on average ($\mathbb{E}S=3$), may be well covered by the initial surplus $u$ and guaranteed survival is reached when $u=15$. On the other hand, Table \ref{ex5} illustrates that a quite confident short term survival possibility can be achieved with a sufficient level of initial savings even when an occurring random expenses are more "aggressive".   

\section{Acknowledgements}
We want to thank anonymous referee for reviewing the manuscript. We also feel very appreciative to professor Jonas Šiaulys for his support and advises writing the paper.    

\bibliography{mybibfile}

\end{document}